\documentclass[14pt,cd]{article}
\usepackage{amssymb}
\usepackage{amsfonts}
\usepackage{mathrsfs}
\usepackage{amsmath}
\usepackage{amsthm}
\usepackage{slashed}
\usepackage{url}
\usepackage{bm}
\usepackage{indentfirst}
\usepackage{upgreek}
\usepackage{hyperref}
\usepackage{graphicx}
\usepackage{setspace}
\usepackage[compact]{titlesec}
\usepackage{cite}
\usepackage{abstract}
\usepackage{lmodern}
\usepackage[T1]{fontenc}
\usepackage{xcolor}
\usepackage{float}
\usepackage[all,cmtip]{xy}
\usepackage{mathtools}
\usepackage{fancyhdr}

\def\Eq{\mathop{\rm Eq}}

\newcounter{oftheorem}[section]
\newenvironment{mytheorem}[1]%
{\begin{trivlist}
		
		\refstepcounter{oftheorem}
		\item[\hspace{\labelsep}\bf\thesection.\arabic{oftheorem} #1.]}%
	{\end{trivlist}}
\newenvironment{definition}{\begin{mytheorem}{Definition}}{\end{mytheorem}}
\newenvironment{lemma}{\begin{mytheorem}{Lemma}\it}{\end{mytheorem}}
\newenvironment{proposition}{\begin{mytheorem}{Proposition}\it}{\end{mytheorem}}
\newenvironment{theorem}{\begin{mytheorem}{Theorem}\it}{\end{mytheorem}}
\newenvironment{corollary}{\begin{mytheorem}{Corollary}\it}{\end{mytheorem}}
\newenvironment{remark}{\begin{mytheorem}{Remark}}{\end{mytheorem}}

\begin{document}
	
\title{Projective Covers of 2-star-permutable Categories}	
\author{Vasileios Aravantinos-Sotiropoulos\footnote{The author's research is funded by a doctoral grant Fonds Sp\'eciaux de Recherche of the Universit\'e Catholique de Louvain.}
}

\maketitle	

\begin{abstract}
	We introduce the notion of star-symmetry for relations in a multi-pointed category and use it to obtain a characterization of the projective covers of 2-star-permutable categories. This generalizes the results of Rosick\'y-Vitale for regular Mal'tsev categories \cite{RV}, as well as those of Gran-Rodelo for regular subtractive categories \cite{Jonnson-Tarski}. We apply the characterization in terms of star-symmetry to recover the syntactic conditions defining E-subtractive varieties in the sense of Ursini \cite{U}.
\end{abstract}

\section{Introduction}

The notion of \emph{multi-pointed category} has in recent years been introduced and studied as a setting where certain pointed and non-pointed contexts of interest in Categorical and Universal Algebra can be treated simultaneously. A multi-pointed category is simply a category $\mathcal{C}$ equipped with an \emph{ideal} $\mathcal{N}$ of morphisms in the sense of Ehresmann \cite{Ehresmann}, i.e. a collection of morphisms in $\mathcal{C}$ such that $fg\in\mathcal{N}$ whenever $f\in\mathcal{N}$ or $g\in\mathcal{N}$. The \emph{pointed context} is captured by taking $\mathcal{N}$ to be the class of zero morphisms in a pointed category, while non-pointed settings, which are referred to as the \emph{total context}, are captured by choosing $\mathcal{N}$ to be the class of all morphisms of a category. This has allowed the unification and extension of various results and characterizations known in pointed and non-pointed Categorical Algebra to the context of multi-pointed categories. First, in the article \cite{Good Ideals} the authors introduced the notion of a multi-pointed category with a \emph{good theory of ideals} and unified results from the realm of \emph{ideal determined} categories, on one hand, and \emph{Barr-exact Goursat} categories, on the other. Next, in \cite{Diamonds}, notions of permutability of equivalence relations in multi-pointed categories were introduced and studied in connection with certain diagrammatic characterizations, known for regular \emph{subtractive} categories and \emph{Goursat} categories. Furthermore, in \cite{3 by 3 Lemma} the authors considered generalizations of homological lemmas, such as the \emph{3 $\times$ 3 Lemma} and the \emph{Short Five Lemma}. In non-pointed contexts the appropriate notion of exact sequence is that of \emph{exact fork}, which is a sequence consisting of a kernel pair together with its coequalizer. Then, in a more general multi-pointed context, the pertinent notion becomes that of a \emph{star-exact} sequence, which unifies the pointed and non-pointed versions, and allows for the aforementioned multi-pointed homological lemmas. Finally, in \cite{On 2-star-perm} the notion of 2-star-permutable category was studied as a common extension of both regular subtractive and regular Mal'tsev categories and characterizations of these categories via diagrams such as regular pushouts were generalized to a multi-pointed context. In the present note we want to add to this list a characterization of \emph{projective covers} of regular 2-star-permutable multi-pointed categories.

There has been a lot of interest and work carried out in the literature on obtaining characterizations for the projective covers of various types of regular categories. The first result of this kind appears already in the work of Freyd in \cite{La Jolla} in connection with his construction of the free abelian category on a given (pre-)additive one. About 3 decades later, Carboni and Vitale gave beautiful constructions for free \emph{regular} and \emph{exact} categories. Since abelian categories are in particular exact, these constructions can be used to recover in a nice conceptual manner the aforementioned one by Freyd, as well as other results on abelian categories (see \cite{RV}). One important feature of these \emph{regular and exact completions} is that they apply to any category which is merely \emph{weakly lex}, i.e. which is only required to have weak finite limits \cite{CV}. Then it turns out that any such category $\mathcal{C}$ appears as a projective cover inside both its regular completion and its exact completion and, furthermore, that a free exact category is the exact completion of any one of its projective covers. Such and other motivations have led various authors to establish characterizations for the projective covers of regular and exact categories that are extensive\cite{Gran-Vitale}, Mal'tsev\cite{RV}, protomodular, semi-abelian\cite{Gran}, unital, subtractive\cite{Jonnson-Tarski} and others.

In this note we look at regular Mal'tsev and regular subtractive categories as special cases of the notion of 2-star-permutable category, following the line of research in \cite{Diamonds},\cite{On 2-star-perm}. The aim here is to obtain a characterization of the projective covers of 2-star-permutable multi-pointed categories, thus unifying and subsuming the known characterizations in the Mal'tsev \cite{RV} and subtractive \cite{Jonnson-Tarski} settings. To accomplish this we first prove that 2-star-permutability is equivalent to a certain symmetry property of reflexive relations (\ref{star-symmetry def}, \ref{2-star-permutability characterizations}), which specializes to known characterizations in both the total and pointed contexts. In the total context it becomes the well-known statement \cite{CKP} that a regular category is Mal'tsev if and only if every reflexive relations in it is symmetric, while in the pointed context it says that a regular category is subtractive if and only if every reflexive relation in it is 0-symmetric \cite{AU,Closedness 1}. We then introduce the appropriate ``weakening'' of this symmetry property in the context of multi-pointed categories with only weak finite limits and weak kernels (\ref{star-G-Mal'tsev}) and prove that this weakened property gives the desired characterization of projective covers (\ref{main result}). This result yields, in particular, a characterization of  when the regular completion and the exact completion of a category with weak finite limits are 2-star-permutable. Finally, we apply the result to the case of varieties of universal algebras which have a non-empty set of constants, allowing us to recover the syntactic conditions defining \emph{E-subtractive} varieties in the sense of Ursini\cite{U}.
\vspace{2mm}

\textbf{Acknowledgments:} The author would like to acknowledge with gratitude Professor Marino Gran for numerous helpful conversations and suggestions on the topic and presentation of this paper.
\vspace{3mm}

\section{Preliminaries}

\subsection{Regular categories and relations}

A finitely complete category $\mathcal{E}$ is called \emph{regular} when every kernel pair in $\mathcal{E}$ has a coequalizer and moreover regular epimorphisms in $\mathcal{E}$ are stable under pullbacks. Equivalently, $\mathcal{E}$ is regular if it admits (regular epi, mono) factorizations of morphisms and these are stable under pullback.

A \emph{relation} $R$ from $X$ to $Y$ in any finitely complete category is a subobject $\langle r_{0}, r_{1}\rangle :R\rightarrowtail X\times Y$. When $Y=X$ we will say that $R$ is a relation on $X$ and also denote this by a parallel pair $\xy\xymatrix{R\ar@<1ex>[r]^{r_{0}}\ar@<-1ex>[r]_{r_{1}} & X}\endxy$. The \emph{opposite} relation $R^{\circ}$ is the relation given by $\langle r_{1}, r_{0}\rangle :R\rightarrowtail Y\times X$. Any morphism $f:X\rightarrow Y$ can be considered as a relation by identifying it with its graph $\langle 1_{X}, f\rangle :X\rightarrowtail X\times Y$. Then we will write $f^{\circ}$ to denote the opposite of the latter relation.

In the context of a regular category $\mathcal{E}$ \cite{Barr} it is possible to define a composition of relations which, moreover, is associative. If $R$ is a relation from $X$ to $Y$ and $S$ is a relation from $Y$ to $Z$, then we denote their composition by $SR$, which is a relation from $X$ to $Z$. The diagonal relations $\Delta_{X}=\langle 1_{X}, 1_{X}\rangle:X\rightarrowtail X\times X$ act as identities for the composition of relations on either side. Furthermore, if a relation $R$ is given by the subobject $\langle r_{0}, r_{1}\rangle:R\rightarrowtail X\times Y$, then we can write it as $R=r_{1}r_{0}^{\circ}$ in the above notation.

If $\Eq(f)$ denotes the kernel pair of a morphism $f:X\rightarrow Y$, then as a relation on $X$ we have $\Eq(f)=f^{\circ}f$.

Let $f:X\rightarrow Y$ be a morphism and $S$ be a relation on $Y$. We denote by $f^{-1}(S)$ the \emph{inverse image} of the relation $S$ along $f$, which is the relation on $X$ defined as the pullback of the subobject $S\rightarrowtail Y\times Y$ along the morphism $f\times f:X\times X\rightarrow Y\times Y$. Then in the calculus of relations we have that $f^{-1}(S)=f^{\circ}Sf$.

\subsection{Projective covers}

Let $\mathcal{E}$ be a category with a full subcategory $\mathcal{C}$. We say that $\mathcal{C}$ is a \emph{projective cover} of $\mathcal{E}$ if the following two conditions hold:
\begin{itemize}
	\item Every object of $\mathcal{C}$ is a regular projective in $\mathcal{E}$.
	\item For every object $E\in\mathcal{E}$ there exists a regular epimorphism $P\twoheadrightarrow E$ with $P\in\mathcal{C}$.
\end{itemize}

A regular epi $P\twoheadrightarrow E$ with $P\in\mathcal{C}$ is called a $\mathcal{C}$-\emph{cover} of $E$.

Even if $\mathcal{E}$ has limits of some type, $\mathcal{C}$ will in general only have \emph{weak} limits of that type. So if $\mathcal{E}$ has finite limits (e.g. if it is regular), then $\mathcal{C}$ will be \emph{weakly lex}, i.e. will have all weak finite limits. To construct the weak limit of a diagram in $\mathcal{C}$ one first constructs the actual limit in the ambient category $\mathcal{E}$ and then one takes a $\mathcal{C}$-cover of the latter limit.

Finally, every weakly lex category $\mathcal{C}$ appears as a projective cover inside both its regular completion $\mathcal{C}_{reg}$ and its exact completion $\mathcal{C}_{ex}$ in the sense of \cite{CV}.

\subsection{Multi-pointed categories and stars}
We first recall here some basic notions introduced in \cite{Good Ideals}.

A \emph{multi-pointed category} is a pair $(\mathcal{C},\mathcal{N})$ consisting of a category $\mathcal{C}$ and a distinguished class $\mathcal{N}$ of morphisms in $\mathcal{C}$ which is an \emph{ideal}. The latter, as mentioned in the introduction, means that for any pair of arrows $f:X\rightarrow Y$ and $g:Y\rightarrow Z$ in $\mathcal{C}$, either $f\in\mathcal{N}$ or $g\in\mathcal{N}$ implies that $gf\in\mathcal{N}$. The elements of $\mathcal{N}$ are usually referred to as \emph{null} morphisms.

We will often by abuse say that $\mathcal{C}$ is a multi-pointed category and suppress the ideal $\mathcal{N}$ if there is no possibility of confusion.

An $\mathcal{N}$-\emph{kernel} of a morphism $f:X\rightarrow Y$ is a morphism $k:K\rightarrow X$ such that $fk\in\mathcal{N}$ and which is universal with this property, i.e. whenever $fg\in\mathcal{N}$ there is a unique morphism $u$ such that $ku=g$. Note that $k$ is then necessarily a monomorphism. Observe also that this gives the usual notion of kernel in the pointed context, while in the total context the $\mathcal{N}$-kernels are just identities.

Since we shall have occasion to deal with categories that only have weak finite limits, we will also correspondingly require the notion of \emph{weak} $\mathcal{N}$-\emph{kernel} of a morphism $f:X\rightarrow Y$. This is defined as $\mathcal{N}$-kernels above, but by only requiring existence of the factorization, not necessarily uniqueness.

If $\mathcal{N}$-kernels exist for all morphisms in $\mathcal{C}$, then we shall say that $\mathcal{C}$ is a \emph{multi-pointed category with kernels}. Similarly for weak $\mathcal{N}$-kernels.

We also record here for future use the following basic observation on the behavior of $\mathcal{N}$-kernels under pullback. For the sake of completeness, we also give the easy proof.
\vspace{2mm}

\begin{lemma}\label{pullbacks of N-kernels}
	Consider the following pullback square in a multi-pointed category $(\mathcal{C},\mathcal{N})$.
	\begin{center}
		\hfil
		\xy\xymatrix{K'\ar[r]^{k'}\ar[d]_{g'} & X\ar[d]^{g} \\
			K\ar[r]_{k} & Y}\endxy
		\hfil 
	\end{center} 
	If $k$ is the $\mathcal{N}$-kernel of some $f:Y\rightarrow Z$, then $k'$ is the $\mathcal{N}$-kernel of $fg:X\rightarrow Z$.
\end{lemma}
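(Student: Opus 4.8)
The plan is to verify the universal property of $\mathcal{N}$-kernels for $k':K'\to X$ directly, using the universal property of the pullback square together with the hypothesis that $k:K\to Y$ is the $\mathcal{N}$-kernel of $f:Y\to Z$. First I would check that $k'$ is indeed ``null-composing'' with $fg$, i.e.\ that $fgk'\in\mathcal{N}$: from the commutativity of the pullback square we have $gk'=kg'$, hence $fgk'=fkg'$, and since $fk\in\mathcal{N}$ because $k$ is the $\mathcal{N}$-kernel of $f$, the ideal property of $\mathcal{N}$ gives $fkg'\in\mathcal{N}$, so $fgk'\in\mathcal{N}$ as required.

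Next I would establish the existence and uniqueness of the comparison factorization. Suppose $h:W\to X$ is any morphism with $fgh\in\mathcal{N}$. Then $gh:W\to Y$ satisfies $f(gh)\in\mathcal{N}$, so by the universal property of $k$ as the $\mathcal{N}$-kernel of $f$ there is a unique $v:W\to K$ with $kv=gh$. Now I have a commuting square $gh=kv$, i.e.\ the outer square formed by $h:W\to X$ and $v:W\to K$ commutes with the maps $g:X\to Y$ and $k:K\to Y$; by the universal property of the pullback $K'$ there is a unique morphism $u:W\to K'$ with $k'u=h$ and $g'u=v$. This $u$ witnesses the required factorization through $k'$.

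For uniqueness: if $u_1,u_2:W\to K'$ both satisfy $k'u_i=h$, I would show $u_1=u_2$. Composing with $g$ gives $gk'u_i=gh$, and since $gk'=kg'$ this reads $k(g'u_i)=gh$; by uniqueness in the universal property of $k$ (it is a monomorphism, being an $\mathcal{N}$-kernel) we get $g'u_1=g'u_2$. So $u_1$ and $u_2$ agree after composing with both pullback projections $k'$ and $g'$, whence $u_1=u_2$ by the uniqueness part of the pullback's universal property. This completes the verification that $k'$ is the $\mathcal{N}$-kernel of $fg$.

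There is no real obstacle here — the proof is a routine diagram chase combining the two universal properties — but the one point that deserves a moment's care is the uniqueness argument: one must remember that $\mathcal{N}$-kernels are monomorphisms (as noted in the text) so that the factorization through $k$ is genuinely unique, and then feed this into the uniqueness clause of the pullback. I would also remark in passing that this lemma only uses the \emph{weak} universal property on the $k$-side as far as \emph{existence} is concerned, so an analogous statement holds for weak $\mathcal{N}$-kernels when the square is merely a weak pullback, which is the form that will be relevant later when working inside a projective cover.
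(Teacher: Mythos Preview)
Your proof is correct and follows essentially the same route as the paper's: verify the universal property of the $\mathcal{N}$-kernel for $k'$ by combining the universal property of $k$ with that of the pullback. The only cosmetic differences are that you spell out the preliminary check $fgk'\in\mathcal{N}$ (which the paper leaves implicit) and you unfold the uniqueness argument in full, whereas the paper simply observes that $k'$ is monic as the pullback of the mono $k$; your closing remark on the weak version is a nice addition not in the paper.
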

\begin{proof}
	Let $h:A\rightarrow X$ be such that $fgh\in\mathcal{N}$. Then, since $k$ is the $\mathcal{N}$-kernel of $f$, there exists a unique $u:A\rightarrow K$ such that $ku=gh$. Now the universal property of the pullback gives a unique $v:A\rightarrow K'$ such that $g'v=u$ and $k'v=h$.
	Finally, note that $k'$ is monic because $k$ is monic.
\end{proof}
\vspace{5mm}

A pair of morphisms $r=(r_{0},r_{1}):R\rightrightarrows X$ is called a \emph{star} if $r_{0}\in\mathcal{N}$. When it is moreover jointly monic, we say that it is a \emph{star relation}.

Given a relation $R$ on an object $X$ represented by the jointly monic pair $r=(r_{0},r_{1}):R\rightrightarrows X$ and assuming $\mathcal{N}$-kernels exist, we define the \emph{star} of $R$ to be the relation $R^{*}$ on $X$ represented by the pair $(r_{0}k_{0},r_{1}k_{0})$ where $k_{0}:K_{0}\rightarrow R$ is the $\mathcal{N}$-kernel of $r_{0}$. Equivalently, one could say that $R^{*}$ is the largest subrelation of $R$ which is a star. In particular, when $R=\Eq(f)$ is the kernel pair of a morphism $f:X\rightarrow Y$, then $R^{*}=\Eq(f)^{*}$ is called the \emph{star-kernel} of $f$.

In the context of a regular multi-pointed category it is possible to use the usual calculus of relations to develop a calculus of star relations, as is done in \cite{Diamonds}. We shall not really need much of this though. We just record here the fact that, given relations $R,S$ on an object $X$, we have that $(RS)^{*}=RS^{*}$.
\vspace{1mm}

Finally, we record an observation on how the star of a relation on an object $X$ can be computed as a certain pullback involving the $\mathcal{N}$-kernel $\kappa_{X}:K_{X}\rightarrowtail X$ of the identity $1_{X}:X\rightarrow X$. Observe that this is just a generalization of the fact that the 0-class of a relation $R\rightarrowtail X\times X$ in a pointed category can be computed as the pullback of that relation along $\langle 0,1\rangle:X\rightarrow X\times X$. Since the more general statement does not appear in the literature, we also provide a proof.

\begin{lemma}\label{Star as pullback}
	Consider a relation $\xy\xymatrix{R\ar@<1ex>[r]^{r_{0}}\ar@<-1ex>[r]_{r_{1}} & X}\endxy$ in the multi-pointed category $(\mathcal{E},\mathcal{N})$ with kernels. Then the $\mathcal{N}$-kernel $k_{0}$ of $r_{0}$ is obtained as the following pullback.
	\begin{center}
		\hfil
		\xy\xymatrixcolsep{3pc}\xymatrix{K_{0}\ar@{>->}[r]^{k_{0}}\ar@{>->}[d]_{\langle \overline{r_{0}}, r_{1}k_{0}\rangle} & R\ar@{>->}[d]^{\langle r_{0},r_{1}\rangle} \\
			K_{X}\times X\ar@{>->}[r]_{\kappa_{X}\times 1_{X}} & X\times X}\endxy
		\hfil 
	\end{center}
	where $\kappa_{X}:K_{X}\rightarrowtail X$ is the $\mathcal{N}$-kernel of the identity $1_{X}:X\rightarrow X$.
\end{lemma}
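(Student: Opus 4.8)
The plan is to verify directly that the square is a pullback by checking the universal property, using the defining properties of $\mathcal{N}$-kernels. First I would observe that the square is at least well-defined: since $r_0k_0 \in \mathcal{N}$ by definition of $k_0$ as the $\mathcal{N}$-kernel of $r_0$, and $\kappa_X$ is the $\mathcal{N}$-kernel of $1_X$, the morphism $r_0k_0 \colon K_0 \to X$ factors (uniquely, since $\kappa_X$ is monic) through $\kappa_X$ as $\kappa_X \overline{r_0} = r_0 k_0$ for a unique $\overline{r_0}\colon K_0 \to K_X$. This gives the left vertical arrow $\langle \overline{r_0}, r_1 k_0\rangle$, and commutativity of the square is then immediate: $(\kappa_X \times 1_X)\langle \overline{r_0}, r_1k_0\rangle = \langle \kappa_X\overline{r_0}, r_1k_0\rangle = \langle r_0k_0, r_1k_0\rangle = \langle r_0,r_1\rangle k_0$.

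Next I would check the universal property. Suppose given $h\colon A \to R$ and $\langle a, b\rangle \colon A \to K_X \times X$ with $\langle r_0, r_1\rangle h = (\kappa_X\times 1_X)\langle a,b\rangle = \langle \kappa_X a, b\rangle$. Comparing components through the monomorphism $\langle r_0,r_1\rangle$, we get $r_0 h = \kappa_X a$ and $r_1 h = b$. Since $\kappa_X \in \mathcal{N}$ (it is the $\mathcal{N}$-kernel of $1_X$, hence null as $1_X\kappa_X = \kappa_X$ lies in $\mathcal{N}$ — more directly, any $\mathcal{N}$-kernel composed with the map it is a kernel of lands in $\mathcal{N}$, but here I want $\kappa_X$ itself to be null; this holds because $\kappa_X = 1_X\kappa_X \in \mathcal{N}$), the composite $r_0 h = \kappa_X a$ is in $\mathcal{N}$ by the ideal property. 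Therefore $h$ factors uniquely through the $\mathcal{N}$-kernel $k_0$ of $r_0$: there is a unique $u\colon A \to K_0$ with $k_0 u = h$. It remains to see that $u$ also makes the other triangle commute, i.e. $\langle \overline{r_0}, r_1k_0\rangle u = \langle a, b\rangle$. For the second component, $r_1 k_0 u = r_1 h = b$. For the first, $\kappa_X \overline{r_0} u = r_0 k_0 u = r_0 h = \kappa_X a$, and since $\kappa_X$ is monic we conclude $\overline{r_0} u = a$. Uniqueness of $u$ follows from that of the factorization through $k_0$ together with $k_0$ being monic, so any competing factorization must equal $u$ already at the level of the map into $R$.

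The only genuinely delicate point is the step where one argues $r_0 h \in \mathcal{N}$ in order to invoke the universal property of $k_0$; this is where the ideal axiom is used, via the fact that $\kappa_X$ is itself a null morphism. Everything else is a routine diagram chase exploiting that $\langle r_0, r_1\rangle$ and $\kappa_X$ are monomorphisms, so that factorizations, once they exist, are automatically unique and compatible. I would present the argument in the order above: well-definedness of the left vertical map, commutativity, then existence and uniqueness of the comparison morphism.
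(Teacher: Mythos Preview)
Your proof is correct and follows essentially the same route as the paper's: construct $\overline{r_0}$ from $r_0k_0\in\mathcal{N}$, then for the universal property use $\kappa_X\in\mathcal{N}$ to conclude $r_0h\in\mathcal{N}$, factor through $k_0$, and verify the remaining component using that $\kappa_X$ is monic. The only difference is cosmetic (variable names, and you spell out explicitly why $\kappa_X\in\mathcal{N}$ and why the square commutes, which the paper leaves implicit).
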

\begin{proof}
	We consider the $\mathcal{N}$-kernel $k_{0}:K_{0}\rightarrowtail R$ of $r_{0}$ and we will show that there is a pullback square as indicated.
	
	First, observe that $r_{0}k_{0}\in\mathcal{N}$ implies that there is a $\overline{r_{0}}:K_{0}\rightarrow K_{X}$ such that $\kappa_{X}\overline{r_{0}}=r_{0}k_{0}$, giving the indicated morphism $K_{0}\rightarrow K_{X}\times X$ in the above commutative diagram.
	
	Now assume that $f=\langle f_{0},f_{1}\rangle:Z\rightarrow K_{X}\times X$ and $g:Z\rightarrow R$ are such that $(\kappa_{X}\times 1_{X})f=\langle r_{0},r_{1}\rangle g$. Then $r_{0}g=\kappa_{X}f_{0}$ and $r_{1}g=f_{1}$. Since $\kappa_{X}\in\mathcal{N}$, the first of these implies that $r_{0}g\in\mathcal{N}$ and hence there exists a unique $h:Z\rightarrow K_{0}$ such that $k_{0}h=g$. Then also $\langle \overline{r_{0}},r_{1}k_{0}\rangle h=\langle \overline{r_{0}}h,r_{1}k_{0}h\rangle=\langle \overline{r_{0}}h,r_{1}g\rangle=\langle f_{0},f_{1}\rangle =f$, where $\overline{r_{0}}h=f_{0}$ follows because $\kappa_{X}\overline{r_{0}}h=r_{0}k_{0}h=r_{0}g=\kappa_{X}f_{0}$ and $\kappa_{X}$ is monic.
\end{proof}

\section{2-star-permutable categories}

Let us recall the definition of 2-star-permutability from \cite{Diamonds}.

\begin{definition}
	Let $\mathcal{C}$ be a regular multi-pointed category with kernels. We say that $\mathcal{C}$ is \emph{2-star-permutable} if for any two effective equivalence relations $R,S$ on an object $X\in\mathcal{C}$ we have $RS^{*}=SR^{*}$.
\end{definition}

In the case of a pointed variety of universal algebras, the star of a relation $R$ on $X$ is the subrelation $R^{*}=\{(0,x)\in X\times X | (0,x)\in R \}$. More generally, in any pointed context, the star of the relation $\langle r_{0}, r_{1}\rangle: R\rightarrowtail X\times X$ is the relation $\langle 0,c\rangle: C\rightarrowtail X\times X$ where $c:C\rightarrowtail X$ is the \emph{0-class} of $R$, i.e. where the mono $c:C\rightarrowtail X$ is given by $c=r_{1}\ker(r_{0})$. Thus, the above definition says precisely that effective equivalence relations are \emph{0-permutable} and this is known to characterize regular subtractive categories (see \cite{Closedness 1},\cite{Diamonds} and \cite{AU} for the varietal case). In the total context, since the star of any relation is that relation itself, the definition says that effective equivalence relations are permutable, which yields precisely the regular Mal'tsev categories \cite{CKP}.
\vspace{5mm}

We first want to present an equivalent characterization of 2-star-permutability in terms of a symmetry property of reflexive relations. The symmetry property in question will be the following.

\begin{definition}\label{star-symmetry def}
	Let $\mathcal{E}$ be a multi-pointed category with kernels and $\xy\xymatrix{R\ar@<1ex>[r]^{r_{0}}\ar@<-1ex>[r]_{r_{1}} & X}\endxy$ a relation in $\mathcal{E}$. We say that $R$ is \emph{left star-symmetric} if $R^{*}\leq(R^{\circ})^{*}$. We say that it is \emph{star-symmetric} if $R^{*}=(R^{\circ})^{*}$, i.e. if both $R$ and $R^{\circ}$ are left star-symmetric.
\end{definition}

Observe that in the pointed context left star-symmetry becomes the usual notion of \emph{left 0-symmetry}, i.e. the statement that $R$ satisfies the implication $(0,x)\in_{A} R\implies (x,0)\in_{A} R$ for any generalized element $x:A\rightarrow X$ of $X$. In the total context on the other hand, $R$ being left star-symmetric just means that $R\leq R^{\circ}$, which is to say that $R$ is a symmetric relation in the ordinary sense. In particular, in this case left star-symmetry and star-symmetry become equivalent.

Indeed, note more generally that for any generalized elements $x,y:A\rightarrow X$ in $\mathcal{E}$ we have that $(x,y)\in_{A}R^{*}$ precisely if $(x,y)\in_{A}R$ and $x\in\mathcal{N}$. Thus, $R$ being left-star symmetric is saying that whenever $(n,y)\in_{A}R$ with $n\in\mathcal{N}$, then also $(y,n)\in_{A}R$.
\vspace{2mm}

We will need the following lemma, from \cite{Diamonds}, for the proof of our next proposition.

\begin{lemma}\label{inverse image of star}
	For any morphism $f:X\rightarrow Y$ and every relation $S$ on $Y$ in a multi-pointed category we have $(f^{-1}(S))^{*}=(f^{-1}(S^{*}))^{*}$.
\end{lemma}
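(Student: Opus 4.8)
The plan is to prove the equality of these two subobjects of $X\times X$ by a generalized-element argument, relying on the two descriptions already available in the preliminaries: first, that for any relation $R$ on $X$ one has $(x,y)\in_{A}R^{*}$ precisely when $(x,y)\in_{A}R$ and $x\in\mathcal{N}$ (noted just before Lemma~\ref{Star as pullback}, and recoverable from Lemma~\ref{Star as pullback} itself); and second, that since $f^{-1}(S)$ is by definition the pullback of $S\rightarrowtail Y\times Y$ along $f\times f$, one has $(x,y)\in_{A}f^{-1}(T)$ if and only if $(fx,fy)\in_{A}T$, for any relation $T$ on $Y$ and any generalized element $(x,y):A\rightarrow X\times X$.

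With these in hand I would simply unwind both sides. Membership of $(x,y)$ in $(f^{-1}(S))^{*}$ amounts to $(fx,fy)\in_{A}S$ together with $x\in\mathcal{N}$. Membership in $(f^{-1}(S^{*}))^{*}$ amounts to $(fx,fy)\in_{A}S^{*}$ together with $x\in\mathcal{N}$, that is, to $(fx,fy)\in_{A}S$ together with $fx\in\mathcal{N}$ and $x\in\mathcal{N}$. The two descriptions differ only by the extra clause $fx\in\mathcal{N}$, which is implied by $x\in\mathcal{N}$ since $\mathcal{N}$ is an ideal; hence the two conditions are equivalent, and the two subobjects of $X\times X$ coincide by the Yoneda lemma.

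There is essentially no serious obstacle: the whole content of the statement is the ideal axiom $x\in\mathcal{N}\Rightarrow fx\in\mathcal{N}$, and the only point requiring care is invoking the element-wise descriptions of $(-)^{*}$ and of inverse images correctly. If a point-free argument is preferred, one inclusion is immediate from monotonicity of inverse image and of $(-)^{*}$ applied to $S^{*}\leq S$, giving $(f^{-1}(S^{*}))^{*}\leq(f^{-1}(S))^{*}$; for the reverse inclusion one can paste the pullback square of Lemma~\ref{Star as pullback} computing $(f^{-1}(S))^{*}$ onto the analogous square for $S^{*}$ and invoke Lemma~\ref{pullbacks of N-kernels} to identify the relevant $\mathcal{N}$-kernel. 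The element computation is, however, the cleaner route and the one I would present.
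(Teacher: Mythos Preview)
Your argument is correct. The paper does not actually supply its own proof of this lemma: it is stated with an attribution to \cite{Diamonds} and no argument is given, so there is nothing to compare against. Your generalized-element computation is the natural one and goes through exactly as you describe; the only content is indeed the ideal axiom $x\in\mathcal{N}\Rightarrow fx\in\mathcal{N}$. One minor bibliographic slip: the element-wise description of $(x,y)\in_{A}R^{*}$ that you invoke appears in the paper in the paragraph following Definition~\ref{star-symmetry def}, not just before Lemma~\ref{Star as pullback}; but the statement is exactly what you need and is recorded there explicitly.
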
	
\vspace{3mm}

We can now present new equivalent characterizations of 2-star-permutability using the notion of star-symmetry. In fact, this allows us to also deduce that 2-star-permutability is equivalent to having the equality $RS^{*}=SR^{*}$ for any two equivalence relations $R,S$ on the same object, not just effective ones. This does not seem to have appeared in the literature before.
\vspace{2mm}

\begin{proposition}\label{2-star-permutability characterizations}
	For a regular multi-pointed category $\mathcal{C}$ with kernels the following are equivalent:
	\begin{enumerate}
		\item $\mathcal{C}$ is 2-star-permutable.
		\item For any two equivalence relations $R,S$ on an object $X\in\mathcal{C}$ we have $RS^{*}=SR^{*}$.
		\item Every reflexive relation $E$ in $\mathcal{C}$ is left star-symmetric, i.e. $E^{*}\leq(E^{\circ})^{*}$.
		\item Every reflexive relation $E$ in $\mathcal{C}$ is star-symmetric, i.e. $E^{*}=(E^{\circ})^{*}$.
	\end{enumerate}
\end{proposition}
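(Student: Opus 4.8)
The plan is to prove the cycle of implications $(1)\Rightarrow(4)\Rightarrow(3)\Rightarrow(2)\Rightarrow(1)$. Of these, $(4)\Rightarrow(3)$ and $(2)\Rightarrow(1)$ are immediate, since an equality $E^{*}=(E^{\circ})^{*}$ entails the inequality $E^{*}\leq(E^{\circ})^{*}$, and effective equivalence relations are in particular equivalence relations. For $(3)\Rightarrow(2)$, let $R,S$ be equivalence relations on an object $X$. Since composition of relations is monotone and $\Delta_{X}\leq R$, $\Delta_{X}\leq S$, the composite $RS$ is a reflexive relation on $X$, so $(3)$ gives $(RS)^{*}\leq((RS)^{\circ})^{*}$. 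Using the identity $(RS)^{*}=RS^{*}$ recalled in the preliminaries, together with $(RS)^{\circ}=S^{\circ}R^{\circ}$ and the symmetry of $R$ and $S$, the right-hand side equals $(S^{\circ}R^{\circ})^{*}=S^{\circ}(R^{\circ})^{*}=SR^{*}$. Hence $RS^{*}\leq SR^{*}$, and interchanging $R$ and $S$ yields the reverse inequality, so that $RS^{*}=SR^{*}$.

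The substance of the statement is $(1)\Rightarrow(4)$. It suffices to show that $(1)$ forces every reflexive relation $E$ on an object $X$ to satisfy $(E^{\circ})^{*}\leq E^{*}$: applying this also to the reflexive relation $E^{\circ}$ then gives $E^{*}\leq(E^{\circ})^{*}$, hence $E^{*}=(E^{\circ})^{*}$, which is $(4)$. So fix a reflexive relation $\langle e_{0},e_{1}\rangle : E\rightarrowtail X\times X$ together with a section $d:X\to E$ with $e_{0}d=e_{1}d=1_{X}$. On the object $E$ the kernel pairs $\Eq(e_{0})=e_{0}^{\circ}e_{0}$ and $\Eq(e_{1})=e_{1}^{\circ}e_{1}$ are effective equivalence relations, so $(1)$ applied on $E$ gives $\Eq(e_{0})\,\Eq(e_{1})^{*}=\Eq(e_{1})\,\Eq(e_{0})^{*}$. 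I now compose this equality of relations on $E$ on the left with $e_{1}$ and on the right with $d$. A computation in the calculus of relations shows that the left-hand side becomes $e_{1}e_{0}^{\circ}e_{0}e_{1}^{\circ}\kappa_{X}\kappa_{X}^{\circ}=(EE^{\circ})^{*}$ and the right-hand side becomes $e_{1}e_{0}^{\circ}\kappa_{X}\kappa_{X}^{\circ}=E^{*}$; here one uses $e_{1}d=e_{0}d=1_{X}$, the basic identity $ff^{\circ}f=f$, the relation $\Eq(e_{i})^{*}=\Eq(e_{i})\kappa_{E}\kappa_{E}^{\circ}$ (a case of $(RS)^{*}=RS^{*}$ together with Lemma \ref{Star as pullback}), and the identity $\kappa_{E}\kappa_{E}^{\circ}d=d\kappa_{X}\kappa_{X}^{\circ}$. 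Therefore $(EE^{\circ})^{*}=E^{*}$. Finally, $(EE^{\circ})^{*}=E(E^{\circ})^{*}$ by the identity $(RS)^{*}=RS^{*}$, and reflexivity gives $\Delta_{X}\leq E$, so $(E^{\circ})^{*}=\Delta_{X}(E^{\circ})^{*}\leq E(E^{\circ})^{*}=E^{*}$, as required.

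I expect the main obstacle to be the transport computation in the previous paragraph: one must verify carefully that the star operation interacts with relational composition and with the $\mathcal{N}$-kernels $\kappa_{X},\kappa_{E}$ exactly as claimed — in particular the identities $\Eq(e_{i})^{*}d=\Eq(e_{i})d\,\kappa_{X}\kappa_{X}^{\circ}$ and $(EE^{\circ})^{*}=E(E^{\circ})^{*}$ — and that all these manipulations are legitimate in the relation calculus of a regular multi-pointed category. An alternative, perhaps more transparent, rendering of $(1)\Rightarrow(4)$ argues with generalized elements: to see that $(n,y)\in_{A}E^{\circ}$ with $n\in\mathcal{N}$ forces $(n,y)\in_{A}E$, one checks that the pair of elements $(d\circ n,\,d\circ y)$ of $E$ lies in $\Eq(e_{0})\Eq(e_{1})^{*}$ — the element of $E$ over $(y,n)$ supplied by the hypothesis is a witness, and $d\circ n$ is null since $n$ is — and then $(1)$ places this pair in $\Eq(e_{1})\Eq(e_{0})^{*}$, which, after passing to a regular epimorphism $A'\twoheadrightarrow A$, yields an element of $E$ over $(n,y)$.
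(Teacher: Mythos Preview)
Your proof is correct and follows essentially the same strategy as the paper: the nontrivial implication $(1)\Rightarrow(4)$ is handled by passing to the effective equivalence relations $\Eq(e_0),\Eq(e_1)$ on $E$ and transporting the resulting equality back to $X$ via the reflexivity section. The only difference is bookkeeping in that transport---the paper applies the inverse-image functor $\delta^{-1}(-)=\delta^{\circ}(-)\delta$ together with Lemma~\ref{inverse image of star} to obtain $(E^{\circ})^{*}=E^{*}$ directly, whereas you compose with $e_{1}$ on the left and $d$ on the right (using $R^{*}=R\,\kappa_{X}\kappa_{X}^{\circ}$ and $\kappa_{E}\kappa_{E}^{\circ}d=d\,\kappa_{X}\kappa_{X}^{\circ}$) to first get $(EE^{\circ})^{*}=E^{*}$ and then deduce $(E^{\circ})^{*}\leq E^{*}$; both routes are legitimate in the relational calculus of a regular multi-pointed category.
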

\begin{proof}
	$1.\implies 4.$ Let $\xy\xymatrix{E\ar@<1ex>[r]^{e_{0}}\ar@<-1ex>[r]_{e_{1}} & X}\endxy$ be a reflexive relation with diagonal $\delta:X\rightarrow E$. Set $R\coloneqq \Eq(e_{0})=e_{0}^{\circ}e_{0}$ and $S\coloneqq \Eq(e_{1})=e_{1}^{\circ}e_{1}$, so that both $R$ and $S$ are effective equivalence relations on $E$. Observe that $\delta^{-1}(SR)=\delta^{\circ}e_{1}^{\circ}e_{1}e_{0}^{\circ}e_{0}\delta=e_{1}e_{0}^{\circ}=E$ and $\delta^{-1}(RS)=\delta^{\circ}e_{0}^{\circ}e_{0}e_{1}^{\circ}e_{1}\delta=e_{0}e_{1}^{\circ}=E^{\circ}$. Now using the assumption (1) and \ref{inverse image of star} we have 
	\begin{eqnarray*}
	RS^{*}=SR^{*} & \implies & (RS)^{*}=(SR)^{*} \\
	              & \implies & \delta^{-1}((RS)^{*})=\delta^{-1}((SR)^{*}) \\
	              & \implies & \delta^{-1}((RS)^{*})^{*}=\delta^{-1}((SR)^{*})^{*} \\
	              & \implies & \delta^{-1}(RS)^{*}=\delta^{-1}(SR)^{*} \\
	              & \implies & (E^{\circ})^{*}=E^{*}.
	\end{eqnarray*}

	$4.\implies 2.$ Consider the reflexive relation $E\coloneqq SR$ on $X$. Then we have $E^{*}=(E^{\circ})^{*}\implies (SR)^{*}=(RS)^{*}\implies SR^{*}=RS^{*}$.
		
	$2.\implies 1.$ Clear.
	
	$3.\iff 4.$ Clear by considering both reflexive relations $E$ and $E^{\circ}$.
\end{proof}
\vspace{4mm}

It should be observed that conditions (3) and (4) above can be formulated in any finitely complete multi-pointed category $(\mathcal{C},\mathcal{N})$ with kernels, thus enlarging the class of categories for which the notion of 2-star-permutability can be considered to include non-regular ones. This generalizes the fact that the notion of Mal'tsev category can be formulated as a finitely complete category where every reflexive relation is symmetric \cite{CKP}, as well as the fact that subtractive categories can be defined as pointed finitely complete categories where every reflexive relation is 0-symmetric \cite{Closedness 1}. The following definition therefore appears pertinent.

\begin{definition}\label{star-Mal'tsev}
	A multi-pointed category $(\mathcal{C},\mathcal{N})$ is said to be \emph{star-Mal'tsev} if every reflexive relation in $\mathcal{C}$ is left star-symmetric. Equivalently, if every reflexive relation in $\mathcal{C}$ is star-symmetric.
\end{definition}

With this terminology, \ref{2-star-permutability characterizations} says that a regular multi-pointed category is 2-star-permutable if and only if it is star-Mal'tsev.

We now want to characterize the projective covers of 2-star-permutable regular multi-pointed categories, or, in other words, of regular star-Mal'tsev categories. In doing so, the following notion will play a key role. It is the appropriate adaptation of the notion of star-symmetry to the context of a multi-pointed category with only weak finite limits and weak kernels.

\begin{definition}\label{graph star-symmetry}
	Let $(\mathcal{C},\mathcal{N})$ be a weakly lex multi-pointed category with weak $\mathcal{N}$-kernels. A graph $\xy\xymatrix{G\ar@<1ex>[r]^{g_{0}}\ar@<-1ex>[r]_{g_{1}} & X}\endxy$ in $\mathcal{C}$ is said to be \emph{left star-symmetric} if , given weak $\mathcal{N}$-kernels $k_{0}:K_{0}\rightarrow G$ and $k_{1}:K_{1}\rightarrow G$ of $g_{0}$ and $g_{1}$ respectively, there exists a $\sigma:K_{0}\rightarrow K_{1}$ such that the following diagram serially commutes
	\begin{center}
		\hfil
		\xy\xymatrix{K_{0}\ar@<1ex>[drr]^{g_{0}k_{0}}\ar@<-1ex>[drr]_{g_{1}k_{0}}\ar[rrrr]^{\sigma} &  &  &  & K_{1}\ar@<1ex>[dll]^{g_{1}k_{1}}\ar@<-1ex>[dll]_{g_{0}k_{1}}  \\
		                               &  & X&  &}\endxy
		\hfil 
	\end{center}
	i.e. such that $g_{1}k_{1}\sigma=g_{0}k_{0}$ and $g_{0}k_{1}\sigma=g_{1}k_{0}$ both hold. We say that it is \emph{star-symmetric} if both G and its opposite graph $\xy\xymatrix{G\ar@<1ex>[r]^{g_{1}}\ar@<-1ex>[r]_{g_{0}} & X}\endxy$ are left star-symmetric.
\end{definition}

In other words, a graph $G$ is left star-symmetric if a ``weak star'' of $G$ factors through a weak star of the opposite graph. Note also that the definition does not depend on the chosen weak $\mathcal{N}$-kernels because any two weak $\mathcal{N}$-kernels of the same morphism factor through each other. Furthermore, it is clear that when $G$ is a relation and $\mathcal{N}$-kernels exist the definition says precisely that $G^{*}\leq(G^{\circ})^{*}$, i.e. that $G$ is a left star-symmetric relation.

\begin{remark}
It is easy to see that in the total context we get the usual definition of a symmetric graph, since both $\mathcal{N}$-kernels are identities in this case. In the pointed context one of the two commutativities required above becomes trivial because $g_{0}k_{0}=0=g_{1}k_{1}$ and we obtain the notion of a \emph{left 0-symmetric} graph.
\end{remark}

We now introduce the categories that will appear in our characterization of the projective covers of 2-star-permutable categories. These are the multi-pointed categories with weak finite limits and weak kernels which satisfy the appropriate ``weakening'' of the star-Mal'tsev property.  Our terminology is inspired by that of Rosick\'y-Vitale in \cite{RV} for the total context.

\begin{definition}\label{star-G-Mal'tsev}
	We will say that a weakly lex multi-pointed category $\mathcal{C}$ with weak kernels is \emph{star-G-Mal'tsev} if every reflexive graph in $\mathcal{C}$ is left star-symmetric. Equivalently, if every reflexive graph is star-symmetric.
\end{definition}

\vspace{5mm}

In what follows, we will be considering regular categories $\mathcal{E}$ together with a projective cover $\mathcal{C}$ of $\mathcal{E}$. We are thus interested in how ideals of morphisms in the projective cover are related to ideals of morphisms in the ambient regular category. A thorough analysis of this situation is contained in \cite{Star Regularity}, from which we now borrow and record below the main points that will be of use in the remainder of this paper.

First, if $\mathcal{N}$ is an ideal of morphisms in the regular category $\mathcal{E}$, then we denote by $\mathcal{N}_{\mathcal{C}}$ the restriction of $\mathcal{N}$ to $\mathcal{C}$. It is then clear that $\mathcal{N}_{\mathcal{C}}$ is an ideal in $\mathcal{C}$.

Second, if we are given an ideal $\mathcal{N}$ in the projective cover $\mathcal{C}$, then we define $\mathcal{N}^{\mathcal{E}}$ to be the collection of morphisms $f:X\rightarrow Y$ in $\mathcal{E}$ for which there exists a commutative square
\begin{center}
	\hfil
	\xy\xymatrix{P\ar[r]^{n}\ar@{->>}[d]_{p} & Q\ar@{->>}[d]^{q} \\
	             X\ar[r]_{f} & Y}\endxy
	\hfil 
\end{center}
where $p$ and $q$ are regular epimorphisms and $n\in\mathcal{N}$. It is again not hard to check that $\mathcal{N}^{\mathcal{E}}$ is an ideal in $\mathcal{E}$.

\begin{lemma}\label{Ideals and proj covers}\cite{Star Regularity}
	Let $\mathcal{E}$ be a regular category having a projective cover $\mathcal{C}$.
	\begin{enumerate}
		\item For any ideal $\mathcal{N}$ in $\mathcal{E}$, if $\mathcal{E}$ has $\mathcal{N}$-kernels, then $\mathcal{C}$ has weak $\mathcal{N}_{\mathcal{C}}$-kernels, which can be computed by taking a projective cover of the domain of the $\mathcal{N}$-kernel in $\mathcal{E}$.
		\item For any ideal $\mathcal{N}$ in $\mathcal{C}$, the category $\mathcal{C}$ has weak $\mathcal{N}$-kernels if and only if $\mathcal{E}$ has $\mathcal{N}^{\mathcal{E}}$-kernels.
		\item For any ideal $\mathcal{N}$ in $\mathcal{C}$ we have $(\mathcal{N}^{\mathcal{E}})_{\mathcal{C}}=\mathcal{N}$.
		\item For any ideal $\mathcal{N}$ in $\mathcal{C}$, regular epimorphisms are $\mathcal{N}^{\mathcal{E}}$-saturating in $\mathcal{E}$ (see \ref{saturating morphism def}).
	\end{enumerate}
\end{lemma}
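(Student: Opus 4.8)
We sketch a proof; all four assertions are those of \cite{Star Regularity}. The plan is to treat them in the order $(1)$, $(3)$, $(2)$, $(4)$, since the first three are short diagram chases while the real work sits in $(4)$ and in the ``only if'' part of $(2)$. The only features of a projective cover that I would use throughout are that every object of $\mathcal{C}$ is regular projective in $\mathcal{E}$ — so that every regular epimorphism with codomain in $\mathcal{C}$ splits — and that $\mathcal{C}$ is full in $\mathcal{E}$.

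For $(1)$, given $f\colon X\to Y$ in $\mathcal{C}$ I would form its $\mathcal{N}$-kernel $k\colon K\rightarrowtail X$ in $\mathcal{E}$, take a $\mathcal{C}$-cover $p\colon P\twoheadrightarrow K$, and check that $kp\colon P\to X$ is a weak $\mathcal{N}_{\mathcal{C}}$-kernel of $f$: it is a morphism of $\mathcal{C}$ by fullness; $f\cdot(kp)=(fk)\cdot p\in\mathcal{N}$ because $fk\in\mathcal{N}$ and $\mathcal{N}$ is an ideal, so $f\cdot(kp)\in\mathcal{N}_{\mathcal{C}}$; and if $g\colon A\to X$ in $\mathcal{C}$ satisfies $fg\in\mathcal{N}_{\mathcal{C}}\subseteq\mathcal{N}$, then $g=ku$ for a unique $u$ in $\mathcal{E}$ (by the universal property of $k$), which lifts through the regular epimorphism $p$ to some $v$ with $pv=u$ since $A$ is regular projective, whence $(kp)v=g$. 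The lift $v$ is not unique, which is exactly why one obtains only a \emph{weak} $\mathcal{N}_{\mathcal{C}}$-kernel, and it is visibly computed from a cover of the domain of the $\mathcal{N}$-kernel.

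For $(3)$: the inclusion $\mathcal{N}\subseteq(\mathcal{N}^{\mathcal{E}})_{\mathcal{C}}$ is witnessed, for $f\in\mathcal{N}$, by the square with identity vertical legs; conversely, if $f\colon X\to Y$ is a morphism of $\mathcal{C}$ lying in $\mathcal{N}^{\mathcal{E}}$, witnessed by a commutative square $qn=fp$ with $p,q$ regular epimorphisms and $n\in\mathcal{N}$ (so the vertices lie in $\mathcal{C}$), I would split $p$ via projectivity of $X$, say $ps=1_{X}$, and conclude $f=fps=q(ns)\in\mathcal{N}$ since $ns\in\mathcal{N}$ and $\mathcal{N}$ is an ideal. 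Then the ``if'' part of $(2)$ is immediate: if $\mathcal{E}$ has $\mathcal{N}^{\mathcal{E}}$-kernels, apply $(1)$ to the ideal $\mathcal{N}^{\mathcal{E}}$ of $\mathcal{E}$ to get that $\mathcal{C}$ has weak $(\mathcal{N}^{\mathcal{E}})_{\mathcal{C}}$-kernels, and $(\mathcal{N}^{\mathcal{E}})_{\mathcal{C}}=\mathcal{N}$ by $(3)$.

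The crux is $(4)$ together with the ``only if'' part of $(2)$, which are interdependent. For $(4)$ I would unwind the definition of $\mathcal{N}^{\mathcal{E}}$-saturating morphism (Definition~\ref{saturating morphism def}); what this amounts to for a regular epimorphism $q$ is a reflection principle for $\mathcal{N}^{\mathcal{E}}$-membership along $q$, which I would prove, as in $(3)$, by transporting a witnessing square back along $q$ using the splitting of a $\mathcal{C}$-cover over a regular projective. For the ``only if'' part of $(2)$, assuming $\mathcal{C}$ has weak $\mathcal{N}$-kernels and given $f\colon X\to Y$ in $\mathcal{E}$, I would pick $\mathcal{C}$-covers $p\colon P\twoheadrightarrow X$, $q\colon Q\twoheadrightarrow Y$, lift $fp$ through $q$ to $\bar{f}\colon P\to Q$ in $\mathcal{C}$ by projectivity of $P$, take a weak $\mathcal{N}$-kernel $k\colon K\to P$ of $\bar{f}$ in $\mathcal{C}$, and let $j\colon J\rightarrowtail X$ be the image of $pk$; then $fj\in\mathcal{N}^{\mathcal{E}}$ is read off from the commutative square with top edge $\bar{f}k\in\mathcal{N}$ and vertical legs the regular epimorphism $K\twoheadrightarrow J$ and $q$. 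Uniqueness in the universal property of an $\mathcal{N}^{\mathcal{E}}$-kernel is free because $j$ is monic; for existence one reduces, by covering the domain and using monicity of $j$, to a $g\colon A\to X$ with $A$ regular projective, lifts $g$ through $p$ to $h$, uses $(4)$ to pass from $q\bar{f}h=fg\in\mathcal{N}^{\mathcal{E}}$ to $\bar{f}h\in\mathcal{N}^{\mathcal{E}}$, then $(3)$ to reach $\bar{f}h\in\mathcal{N}$, and finally the weak $\mathcal{N}$-kernel property of $k$ to factor $h$, hence $g$, through $pk$ and so through $j$. I expect the main obstacle to be precisely this entanglement: the genuine (not merely weak) universal property of the $\mathcal{N}^{\mathcal{E}}$-kernel needs saturation of regular epimorphisms already in hand, so $(4)$ must be settled first; the rest is the kind of chase that fullness and projectivity render routine.
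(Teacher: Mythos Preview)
The paper does not prove this lemma; it is quoted from \cite{Star Regularity} without argument, so there is no in-paper proof to compare your proposal against. Your treatments of (1), (3), and the ``if'' direction of (2) are correct and essentially forced.

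Your handling of (4) and of the ``only if'' direction of (2), however, has a real gap. You gloss the saturating property as ``a reflection principle for $\mathcal{N}^{\mathcal{E}}$-membership along $q$'' and then invoke it in (2) to pass from $q\bar{f}h\in\mathcal{N}^{\mathcal{E}}$ to $\bar{f}h\in\mathcal{N}^{\mathcal{E}}$. But Definition~\ref{saturating morphism def} says that $q$ is saturating when the induced $\tilde{q}\colon K_{Q}\to K_{Y}$ is a regular epimorphism; this is a \emph{surjectivity} statement (null generalized elements of $Y$ lift to null generalized elements of $Q$), not the left-cancellation property ``$qm\in\mathcal{N}^{\mathcal{E}}\Rightarrow m\in\mathcal{N}^{\mathcal{E}}$''. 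The latter fails already in the pointed context, where $\mathcal{N}^{\mathcal{E}}$ is the class of zero morphisms and $qm=0$ with $q$ a regular epimorphism certainly does not force $m=0$.

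This is not a cosmetic slip: it breaks your proposed construction of the $\mathcal{N}^{\mathcal{E}}$-kernel. Take $\mathcal{E}=\mathbf{Ab}$, $\mathcal{C}$ the free abelian groups, $\mathcal{N}$ the zero morphisms, and $f\colon\mathbb{Z}\twoheadrightarrow\mathbb{Z}/2$. With $p=1_{\mathbb{Z}}$, $q\colon\mathbb{Z}\twoheadrightarrow\mathbb{Z}/2$ and the lift $\bar{f}=1_{\mathbb{Z}}$ (indeed any lift $\bar f\colon\mathbb{Z}\to\mathbb{Z}$ is injective), the weak kernel of $\bar{f}$ is $0$, so your $j=\operatorname{im}(pk)$ is the zero subobject, whereas the genuine kernel of $f$ is $2\mathbb{Z}$. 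The underlying issue is that a single lift $\bar{f}$ of $fp$ through $q$ is only well defined modulo $\Eq(q)$, and a correct construction of the $\mathcal{N}^{\mathcal{E}}$-kernel must absorb that ambiguity rather than rely on cancelling $q$ on the left.
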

\vspace{5mm}

Before presenting our characterization, we find it useful to isolate the following fundamental observation. 
\vspace{0.3cm}

\begin{lemma}\label{graph-relation star-symmetry}
	Let $\mathcal{C}$ be a projective cover of the regular multi-pointed category $(\mathcal{E},\mathcal{N})$ with kernels. Consider a graph $\xy\xymatrix{G\ar@<1ex>[r]^{g_{0}}\ar@<-1ex>[r]_{g_{1}} & X}\endxy$ in $\mathcal{C}$ with its image factorization $\langle g_{0},g_{1}\rangle=\xy\xymatrix@=3em{G\ar@{->>}[r]^{q} & R\ar@{>->}[r]^(0.4){\langle r_{0},r_{1}\rangle} & X\times X }\endxy$ in $\mathcal{E}$. Then $G$ is a left star-symmetric graph if and only if $R$ is a left star-symmetric relation.
\end{lemma}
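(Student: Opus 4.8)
The plan is to translate the graph-theoretic condition in Definition \ref{graph star-symmetry} into the relational condition $R^{*}\leq(R^{\circ})^{*}$, using the two key facts that (i) $R$ is the regular image of $G$ in $\mathcal{E}$, and (ii) weak $\mathcal{N}_{\mathcal{C}}$-kernels in $\mathcal{C}$ are obtained by taking $\mathcal{C}$-covers of the genuine $\mathcal{N}$-kernels in $\mathcal{E}$ (Lemma \ref{Ideals and proj covers}(1)). I will prove the two implications separately, but both will hinge on relating the weak stars of $G$ and $G^{\circ}$ in $\mathcal{C}$ to the genuine stars $R^{*}$ and $(R^{\circ})^{*}$ in $\mathcal{E}$.

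First I would set up notation carefully. Let $q\colon G\twoheadrightarrow R$ be the regular epi in the image factorization $\langle g_{0},g_{1}\rangle = \langle r_{0},r_{1}\rangle q$, so that $r_{i}q = g_{i}$ for $i=0,1$. In $\mathcal{E}$ form the $\mathcal{N}$-kernels $\ell_{0}\colon L_{0}\rightarrowtail R$ of $r_{0}$ and $\ell_{1}\colon L_{1}\rightarrowtail R$ of $r_{1}$, so that $R^{*}$ is represented by $\langle r_{0}\ell_{0},r_{1}\ell_{0}\rangle$ and $(R^{\circ})^{*}$ by $\langle r_{1}\ell_{1},r_{0}\ell_{1}\rangle$. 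The central observation is that, by Lemma \ref{pullbacks of N-kernels}, pulling back $\ell_{0}$ along $q$ yields the $\mathcal{N}$-kernel of $r_{0}q = g_{0}$ in $\mathcal{E}$; call its domain $\overline{K_{0}}$, so there is a pullback square with a regular epi $\overline{K_{0}}\twoheadrightarrow L_{0}$ (regular epis being pullback-stable). By Lemma \ref{Ideals and proj covers}(1), a weak $\mathcal{N}_{\mathcal{C}}$-kernel $k_{0}\colon K_{0}\to G$ of $g_{0}$ in $\mathcal{C}$ is a $\mathcal{C}$-cover of $\overline{K_{0}}$, hence the composite $K_{0}\twoheadrightarrow\overline{K_{0}}\twoheadrightarrow L_{0}$ is a regular epi; I will write $p_{0}\colon K_{0}\twoheadrightarrow L_{0}$ for it, and it satisfies $\ell_{0}p_{0} = q k_{0}$, whence $r_{i}\ell_{0}p_{0} = g_{i}k_{0}$ for $i=0,1$. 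Symmetrically one gets $p_{1}\colon K_{1}\twoheadrightarrow L_{1}$ with $\ell_{1}p_{1}=qk_{1}$ and $r_{i}\ell_{1}p_{1}=g_{i}k_{1}$.

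For the forward direction, suppose $G$ is left star-symmetric, so there is $\sigma\colon K_{0}\to K_{1}$ with $g_{1}k_{1}\sigma = g_{0}k_{0}$ and $g_{0}k_{1}\sigma = g_{1}k_{0}$. Then $\langle r_{1}\ell_{1}p_{1}, r_{0}\ell_{1}p_{1}\rangle\sigma = \langle g_{1}k_{1}\sigma, g_{0}k_{1}\sigma\rangle = \langle g_{0}k_{0}, g_{1}k_{0}\rangle = \langle r_{0}\ell_{0}p_{0}, r_{1}\ell_{0}p_{0}\rangle$, i.e. the two maps $K_{0}\to X\times X$ representing (pullbacks of) $R^{*}$ and $(R^{\circ})^{*}$ agree. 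Since $p_{0}$ is a regular epi and $\langle r_{1}\ell_{1},r_{0}\ell_{1}\rangle$ is a mono, the map $p_{1}\sigma$ factors as $t p_{0}$ for a unique $t\colon L_{0}\to L_{1}$ with $\langle r_{1}\ell_{1},r_{0}\ell_{1}\rangle t = \langle r_{0}\ell_{0},r_{1}\ell_{0}\rangle$; this $t$ witnesses $R^{*}\leq (R^{\circ})^{*}$ as subobjects of $X\times X$. For the converse, assume $R^{*}\leq (R^{\circ})^{*}$, giving $t\colon L_{0}\to L_{1}$ with $\langle r_{1}\ell_{1},r_{0}\ell_{1}\rangle t=\langle r_{0}\ell_{0},r_{1}\ell_{0}\rangle$; I want to lift $t$ to a map $\sigma\colon K_{0}\to K_{1}$ through the covers $p_{0},p_{1}$. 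The main obstacle, and the place where projectivity is essential, is precisely this lifting: since $K_{0}\in\mathcal{C}$ is a regular projective in $\mathcal{E}$ and $p_{1}\colon K_{1}\twoheadrightarrow L_{1}$ is a regular epi, the composite $t p_{0}\colon K_{0}\to L_{1}$ lifts along $p_{1}$ to some $\sigma\colon K_{0}\to K_{1}$ with $p_{1}\sigma = t p_{0}$. Then $g_{1}k_{1}\sigma = r_{1}\ell_{1}p_{1}\sigma = r_{1}\ell_{1}t p_{0} = r_{0}\ell_{0}p_{0} = g_{0}k_{0}$ and likewise $g_{0}k_{1}\sigma = r_{0}\ell_{1}p_{1}\sigma = r_{0}\ell_{1}t p_{0} = r_{1}\ell_{0}p_{0} = g_{1}k_{0}$, so the serially commuting triangle holds and $G$ is left star-symmetric. (One should double-check that the weak $\mathcal{N}_{\mathcal{C}}$-kernel chosen for the definition may indeed be taken to be a $\mathcal{C}$-cover of the $\mathcal{N}$-kernel in $\mathcal{E}$: by the last sentence of Definition \ref{graph star-symmetry} the condition is independent of the choice of weak $\mathcal{N}$-kernels, so this is harmless.) This completes both implications.
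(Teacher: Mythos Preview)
Your proof is correct and follows essentially the same route as the paper's: both arguments pull back the $\mathcal{N}$-kernels of $r_{0},r_{1}$ along the regular epi $q$ (invoking Lemma~\ref{pullbacks of N-kernels}), take $\mathcal{C}$-covers to obtain weak $\mathcal{N}_{\mathcal{C}}$-kernels of $g_{0},g_{1}$, and then use projectivity for the lifting in the direction $R\Rightarrow G$ and the (regular epi, mono) diagonal fill-in for the direction $G\Rightarrow R$. Apart from notational differences (your $L_{i},K_{i},p_{i},t$ correspond to the paper's $K_{i},P_{i},v_{i}\epsilon_{i},\tilde{\sigma}$) and the order in which the two implications are treated, the arguments coincide.
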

\begin{proof}
	Consider $\mathcal{N}$-kernels $k_{i}:K_{i}\rightarrow R$ of $r_{i}$, for $i=0,1$. Form the pullbacks below for $i=0,1$ and then take $\mathcal{C}$-covers $\epsilon_{i}:P_{i}\twoheadrightarrow K'_{i}$.
	\begin{center}
		\hfil
		\xy\xymatrix{K'_{i}\ar@{->>}[r]^{v_{i}}\ar@{>->}[d]_{k'_{i}} & K_{i}\ar@{>->}[d]^{k_{i}}  \\
			G\ar@{->>}[r]_{q} & R}\endxy
		\hfil 
	\end{center}
	By \ref{pullbacks of N-kernels} we know that $k'_{i}:K'_{i}\rightarrowtail G$ is the $\mathcal{N}$-kernel of $r_{i}q=g_{i}$. Thus, we have that $u_{i}\coloneqq k'_{i}\epsilon_{i}:P_{i}\rightarrow G$ is a weak $\mathcal{N}$-kernel of $g_{i}$ in $\mathcal{C}$ for $i=0,1$ by \ref{Ideals and proj covers}.
	
	Assume first that $R$ is left star-symmetric, so that there exists a morphism $\sigma:K_{0}\rightarrow K_{1}$ such that $r_{1}k_{1}\sigma=r_{0}k_{0}$ and $r_{0}k_{1}\sigma=r_{1}k_{0}$. By projectivity of $P_{0}$ and the fact that $v_{1}\epsilon_{1}$ is a regular epi, there exists a morphism $\tilde{\sigma}:P_{0}\rightarrow P_{1}$ making the following diagram commute.
	\begin{center}
		\hfil
		\xy\xymatrixcolsep{3pc}\xymatrix{P_{0}\ar@{->>}[r]^{v_{0}\epsilon_{0}}\ar@{-->}[dd]_{\tilde{\sigma}} & K_{0}\ar@<1ex>[drr]^{r_{0}k_{0}}\ar@<-1ex>[drr]_{r_{1}k_{0}}\ar[dd]^{\sigma} & &  \\
			&                &  &     X\\
			P_{1}\ar@{->>}[r]_{v_{1}\epsilon_{1}} & K_{1}\ar@<1ex>[urr]^{r_{1}k_{1}}\ar@<-1ex>[urr]_{r_{0}k_{1}} &  &}\endxy
		\hfil 
	\end{center}
	
	Now we have
	\begin{eqnarray*}
		g_{1}u_{1}\tilde{\sigma} & = & r_{1}qk'_{1}\epsilon_{1}\tilde{\sigma} \\
		                         & = & r_{1}k_{1}v_{1}\epsilon_{1}\tilde{\sigma} \\
		                         & = & r_{1}k_{1}\sigma v_{0}\epsilon_{0} \\
		                         & = & r_{0}k_{0}v_{0}\epsilon_{0} \\
		                         & = & r_{0}qk'_{0}\epsilon_{0} \\
		                         & = & g_{0}u_{0}
	\end{eqnarray*}	
	
	and similarly
		
	\begin{eqnarray*}
		g_{0}u_{1}\tilde{\sigma} & = & r_{0}qk'_{1}\epsilon_{1}\tilde{\sigma} \\
		& = & r_{0}k_{1}v_{1}\epsilon_{1}\tilde{\sigma} \\
		& = & r_{0}k_{1}\sigma v_{0}\epsilon_{0} \\
		& = & r_{1}k_{0}v_{0}\epsilon_{0} \\
		& = & r_{1}qk'_{0}\epsilon_{0} \\
		& = & g_{1}u_{0}
	\end{eqnarray*}	
	proving that $G$ is left star-symmetric.
	\vspace{1cm}
	
	Conversely, assume that $G$ is left star-symmetric. This means that there exists a $\sigma:P_{0}\rightarrow P_{1}$ such that $g_{1}u_{1}\sigma=g_{0}u_{0}$ and $g_{0}u_{1}\sigma=g_{1}u_{0}$. We can then again calculate as follows:
	\begin{eqnarray*}
		r_{1}k_{1}v_{1}\epsilon_{1}\sigma & = & r_{1}qk'_{1}\epsilon_{1}\sigma \\
		                                  & = & g_{1}k'_{1}\epsilon_{1}\sigma \\
		                                  & = & g_{1}u_{1}\sigma \\
		                                  & = & g_{0}u_{0} \\
		                                  & = & r_{0}qk'_{0}\epsilon_{0} \\
		                                  & = & r_{0}k_{0}v_{0}\epsilon_{0}
	\end{eqnarray*}

	\begin{eqnarray*}
		r_{0}k_{1}v_{1}\epsilon_{1}\sigma & = & r_{0}qk'_{1}\epsilon_{1}\sigma \\
										  & = & g_{0}k'_{1}\epsilon_{1}\sigma \\
										  & = & g_{0}u_{1}\sigma \\
										  & = & g_{1}u_{0} \\
										  & = & r_{1}qk'_{0}\epsilon_{0} \\
										  & = & r_{1}k_{0}v_{0}\epsilon_{0}
	\end{eqnarray*}

	This means that the square below commutes and so we obtain the indicated morphism $\tilde{\sigma}$ because $v_{0}\epsilon_{0}$ is a regular epi and $\langle r_{1}k_{1},r_{0}k_{1}\rangle$ is monic, being the star of the relation $R^{\circ}$.
	\begin{center}
		\hfil
		\xy\xymatrixcolsep{5pc}\xymatrix@R=3em{P_{0}\ar@{->>}[r]^{v_{0}\epsilon_{0}}\ar[d]_{v_{1}\epsilon_{1}\sigma} & K_{0}\ar@{>->}[d]^{\langle r_{0}k_{0},r_{1}k_{0}\rangle}\ar@{-->}[dl]^{\tilde{\sigma}} \\
			K_{1}\ar@{>->}[r]_{\langle r_{1}k_{1},r_{0}k_{1}\rangle} & X\times X}\endxy	
		\hfil 
	\end{center}
	The commutation of the bottom triangle is precisely left star-symmetry of $R$.
\end{proof}
\vspace{0.5cm}

In order to prove our main result, we will need to impose an additional condition on the regular category $\mathcal{E}$ regarding the behavior of regular epimorphisms with respect to $\mathcal{N}$-kernels. This condition is familiar from the literature (see \cite{Diamonds, On 2-star-perm, Star Regularity}) and is indeed mild enough that it includes all examples of interest. We now proceed to introduce the necessary notions.

Consider any object $X$ in the multi-pointed category $(\mathcal{E},\mathcal{N})$. We will denote by $\kappa_{X}:K_{X}\rightarrowtail X$ the $\mathcal{N}$-kernel of the identity morphism $1_{X}$. Observe that by definition the generalized elements of $K_{X}$ correspond precisely to the generalized elements of $X$ that are in $\mathcal{N}$. Hence, $K_{X}$ should be thought of as consisting of the ``trivial elements'' of the object $X$.

Now given any morphism $f:X\rightarrow Y$ in $\mathcal{E}$, we have a uniquely induced morphism $\tilde{f}:K_{X}\rightarrow K_{Y}$ making the following square commute.
\begin{center}
	\hfil
	\xy\xymatrix{K_{X}\ar@{-->}[d]_{\tilde{f}}\ar@{>->}[r]^{\kappa_{X}} & X\ar[d]^{f} \\
		K_{Y}\ar@{>->}[r]_{\kappa_{Y}} & Y}\endxy
	\hfil 
\end{center}

Then we can introduce the following definition.

\begin{definition}\label{saturating morphism def}
	A morphism $f:X\rightarrow Y$ in a multi-pointed category $(\mathcal{E},\mathcal{N})$ is called \emph{saturating} if the induced morphism $\tilde{f}:K_{X}\rightarrow K_{Y}$ is a regular epimorphism.
\end{definition}

Note that in the pointed context all morphisms are saturating, since $K_{X}=0$ for any object $X$. In the total context, on the other hand, it is clear that the saturating morphisms are precisely the regular epimorphisms. In fact, that all regular epimorphisms are saturating is precisely what we shall require below. 
\vspace{3mm}

Now we can present the main result of this note.
\vspace{0.3cm}

\begin{theorem}\label{main result}
	Let $\mathcal{C}$ be a projective cover of the regular multi-pointed category with kernels $(\mathcal{E},\mathcal{N})$ and assume regular epimorphisms in $\mathcal{E}$ are saturating. Then $(\mathcal{E},\mathcal{N})$ is 2-star-permutable if and only if $(\mathcal{C},\mathcal{N}_{\mathcal{C}})$ is star-G-Mal'tsev.
\end{theorem}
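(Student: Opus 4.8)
**The plan is to prove both implications by transferring star-symmetry between the projective cover $\mathcal{C}$ and the ambient regular category $\mathcal{E}$, using Lemma~\ref{graph-relation star-symmetry} as the central bridge and Proposition~\ref{2-star-permutability characterizations} to reduce 2-star-permutability in $\mathcal{E}$ to every reflexive relation being left star-symmetric.**

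For the ``only if'' direction, suppose $(\mathcal{E},\mathcal{N})$ is 2-star-permutable, hence star-Mal'tsev by Proposition~\ref{2-star-permutability characterizations}. Let $\xy\xymatrix{G\ar@<1ex>[r]^{g_{0}}\ar@<-1ex>[r]_{g_{1}} & X}\endxy$ be a reflexive graph in $\mathcal{C}$; I need it to be left star-symmetric. Take its image factorization $\langle g_{0},g_{1}\rangle = \langle r_{0},r_{1}\rangle q$ in $\mathcal{E}$. Since $G$ is reflexive (the diagonal $\delta_G\colon X\to G$ splits both legs) and $q$ is a regular epi, the relation $R$ is reflexive in $\mathcal{E}$: the composite $q\delta_G$ serves as its diagonal. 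By the star-Mal'tsev property of $\mathcal{E}$, $R$ is left star-symmetric. Then Lemma~\ref{graph-relation star-symmetry} immediately yields that $G$ is left star-symmetric. Note this direction does not even use the saturating hypothesis.

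For the ``if'' direction, suppose $(\mathcal{C},\mathcal{N}_{\mathcal{C}})$ is star-G-Mal'tsev; I want $(\mathcal{E},\mathcal{N})$ 2-star-permutable, equivalently (Proposition~\ref{2-star-permutability characterizations}) every reflexive relation in $\mathcal{E}$ left star-symmetric. Let $\xy\xymatrix{E\ar@<1ex>[r]^{e_0}\ar@<-1ex>[r]_{e_1} & X}\endxy$ be a reflexive relation in $\mathcal{E}$. Take a $\mathcal{C}$-cover $p\colon P\twoheadrightarrow E$ and set $g_i\coloneqq e_i p\colon P\to X$; actually to land in $\mathcal{C}$ I should also take a $\mathcal{C}$-cover of $X$, say $x\colon X_0\twoheadrightarrow X$, and use projectivity of $P$ to lift, obtaining a reflexive graph in $\mathcal{C}$ whose image in $\mathcal{E}$ (along $x\times x$, suitably) recovers $E$ up to the covering. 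More carefully: form the graph $G$ on $X_0$ in $\mathcal{C}$ by pulling $E$ back along $x\times x$ to a relation $E'$ on $X_0$ in $\mathcal{E}$ — which is still reflexive — and then covering $E'$ by an object of $\mathcal{C}$; reflexivity of $E'$ lets us arrange $G$ reflexive by covering compatibly with the diagonal. By hypothesis $G$ is left star-symmetric, so by Lemma~\ref{graph-relation star-symmetry} the relation $E'$ is left star-symmetric. The final step is to push this back down from $E'$ on $X_0$ to $E$ on $X$ along the regular epi $x$: this is exactly where the saturating hypothesis enters, since I must compare the $\mathcal{N}$-kernel of a leg of $E$ with that of the corresponding leg of $E'=x^{-1}(E)$ (equivalently, compare stars $E^*$ and $(x^{-1}(E))^*$), using Lemma~\ref{Star as pullback} to express stars as pullbacks against $\kappa_X$ and $\kappa_{X_0}$ and the fact that $\tilde{x}\colon K_{X_0}\twoheadrightarrow K_X$ is a regular epi to transport the symmetry-witnessing morphism. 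Combined with Lemma~\ref{inverse image of star}, this should give left star-symmetry of $E$.

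**The main obstacle I anticipate is the last step of the ``if'' direction: descending left star-symmetry of the pulled-back reflexive relation $E' = x^{-1}(E)$ on the cover $X_0$ back to $E$ on $X$ itself.** Lemma~\ref{graph-relation star-symmetry} handles the passage between a graph in $\mathcal{C}$ and its image-relation in $\mathcal{E}$ over the \emph{same} object, but it does not address changing the base object along a regular epi. Making this precise requires showing that for a regular epi $x\colon X_0 \twoheadrightarrow X$ with $\tilde x$ a regular epimorphism, left star-symmetry of $x^{-1}(E)$ implies left star-symmetry of $E$ — a statement about how stars interact with direct images of relations along saturating regular epis. I expect this to follow from a diagram chase with the pullback description of stars in Lemma~\ref{Star as pullback}: the square defining $(x^{-1}(E))^*$ maps onto the square defining $E^*$ via $(x,x,\tilde x, x\times x)$, all vertical maps regular epis, so $E^* \twoheadrightarrow$ is covered and one transports the factorization through $((E^\circ))^*$ using regularity (images of relations) plus the monomorphism property of the star of $E^\circ$. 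Getting the saturating hypothesis to do precisely the work of making the relevant comparison map on $\mathcal{N}$-kernels a regular epi is the technical heart, and it is presumably the reason the theorem is stated with that hypothesis rather than without it.
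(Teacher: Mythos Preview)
Your proposal is correct and follows essentially the same route as the paper: the ``only if'' direction via image factorization and Lemma~\ref{graph-relation star-symmetry}, and the ``if'' direction by pulling the reflexive relation back along a $\mathcal{C}$-cover of the base object, covering to get a reflexive graph in $\mathcal{C}$, applying Lemma~\ref{graph-relation star-symmetry} to deduce left star-symmetry of the pulled-back relation, and then descending along the regular epi using the saturating hypothesis together with Lemma~\ref{Star as pullback}. Your anticipated ``main obstacle'' is exactly the technical heart of the paper's argument, and your sketch of it (compare the two pullback squares for the stars, use that $\tilde{x}\times x$ is a regular epi, then fill the resulting square via the regular-epi/mono orthogonality) is precisely what the paper does; note only that Lemma~\ref{inverse image of star} is not actually needed here.
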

\begin{proof}
	Assume first that $\mathcal{E}$ is 2-star-permutable and consider any reflexive graph $\xy\xymatrix{G\ar@<1ex>[r]^{g_{0}}\ar@<-1ex>[r]_{g_{1}} & X}\endxy$ in $\mathcal{C}$ with splitting $\delta:X\rightarrow G$. Consider its image factorization $\langle g_{0},g_{1}\rangle=\xy\xymatrix@=3em{G\ar@{->>}[r]^{q} & R\ar@{>->}[r]^(0.4){\langle r_{0},r_{1}\rangle} & X\times X }\endxy$ in the regular category $\mathcal{E}$. Then the relation $R$ on $X$ is reflexive as well, since $r_{i}q\delta=g_{i}\delta=1_{X}$ for $i=0,1$. Since $\mathcal{E}$ is 2-star-permutable, we know by \ref{2-star-permutability characterizations} that $R$ must be star-symmetric. Now \ref{graph-relation star-symmetry} implies that $G$ is (left) star-symmetric.
	\vspace{1cm}

	Conversely, assume that $(\mathcal{C},\mathcal{N}_{\mathcal{C}})$ is star-G-Mal'tsev. Consider any reflexive relation $\xy\xymatrix{E\ar@<1ex>[r]^{e_{0}}\ar@<-1ex>[r]_{e_{1}} & X}\endxy$ in $\mathcal{E}$. We want to show that $E$ is left star-symmetric.
	
	Take a $\mathcal{C}$-cover $p:\tilde{X}\twoheadrightarrow X$ of $X$ and consider the inverse image relation $E'\coloneqq p^{-1}(E)$. i.e. form the following pullback
	\begin{center}
		\hfil
		\xy\xymatrix@=3em{E'\ar@{>->}[r]^(0.4){\langle e'_{0},e'_{1}\rangle}\ar@{->>}[d]_{q} & \tilde{X}\times\tilde{X}\ar@{->>}[d]^{p\times p} \\
			E\ar@{>->}[r]^(0.4){\langle e_{0},e_{1}\rangle} & X\times X}\endxy
		\hfil 
	\end{center}
	Now again take a $\mathcal{C}$-cover $\epsilon:G\twoheadrightarrow E'$ and set $g_{0}\coloneqq e'_{0}\epsilon$ and $g_{1}\coloneqq e'_{1}\epsilon$, so that we have a graph $\xy\xymatrix{G\ar@<1ex>[r]^{g_{0}}\ar@<-1ex>[r]_{g_{1}} & \tilde{X}}\endxy$ in $\mathcal{C}$. Observe that the relation $E'$ is reflexive, being the inverse image of a reflexive relation. It follows that the graph $G$ is also reflexive. Indeed, if $\delta':\tilde{X}\rightarrow E'$ is the diagonal of $E'$, then by projectivity of $G$ we can lift to a $\tilde{\delta}:\tilde{X}\rightarrow G$ such that $\epsilon\tilde{\delta}=\delta'$ and then $g_{i}\tilde{\delta}=e'_{i}\epsilon\tilde{\delta}=e'_{i}\delta'=1_{\tilde{X}}$ for $i=0,1$.
	
	Now consider $\mathcal{N}$-kernels $k_{i}:K_{i}\rightarrow E$ of $e_{i}$ and $k'_{i}:K'_{i}\rightarrow E$ of $e'_{i}$ in $\mathcal{E}$ for $i=0,1$. We then have induced morphisms $u_{i}:K'_{i}\rightarrow K_{i}$ such that $k_{i}u_{i}=qk'_{i}$. We claim that the $u_{i}$ are regular epimorphisms.
	\begin{center}
		\hfil
		\xy\xymatrix{K'_{i}\ar@{>->}[r]^{k'_{i}}\ar@{->>}[d]_{u_{i}} & E'\ar@{->>}[d]^{q} \\
			K_{i}\ar@{>->}[r]_{k_{i}} & E}\endxy
		\hfil 
	\end{center}
	
	To see this for $u_{0}$ we consider the following two commutative diagrams. In the first one the right-hand square is a pullback by construction, while the left-hand square is a pullback by \ref{Star as pullback}. In the second diagram we know only that the right-hand square is a pullback, again by \ref{Star as pullback}.
	\begin{center}
		\hfil
		\xy\xymatrixcolsep{5pc}\xymatrix{K'_{0}\ar@{>->}[r]^{k'_{0}}\ar[d]_{\langle \overline{e'_{0}}, e'_{1}k'_{0}\rangle} & E'\ar@{->>}[r]^{q}\ar@{>->}[d]_{\langle e'_{0},e'_{1}\rangle} & E\ar@{>->}[d]^{\langle e_{0},e_{1}\rangle} \\
		             K_{\tilde{X}}\times \tilde{X}\ar[r]_{\kappa_{\tilde{X}\times 1_{\tilde{X}}}} & \tilde{X}\times\tilde{X}\ar@{->>}[r]_{p\times p} & X\times X}\endxy
		\hfil 
	\end{center}

	\begin{center}
		\hfil
		\xy\xymatrixcolsep{5pc}\xymatrix{K'_{0}\ar@{->>}[r]^{u_{0}}\ar[d]_{\langle \overline{e'_{0}}, e'_{1}k'_{0}\rangle} & K_{0}\ar@{>->}[r]^{k_{0}}\ar[d]_{\langle \overline{e_{0}}, e_{1}k_{0}\rangle} & E\ar@{>->}[d]^{\langle e_{0},e_{1}\rangle} \\
		             K_{\tilde{X}}\times \tilde{X}\ar@{->>}[r]_{\tilde{p}\times p} & K_{X}\times X\ar[r]_{\kappa_{X}\times 1_{X}} & X\times X}\endxy
		\hfil 
	\end{center}

	Since $(p\times p)(\kappa_{\tilde{X}}\times1_{\tilde{X}})=(\kappa_{X}\times 1_{X})(\tilde{p}\times p)$, we deduce that the outer rectangle in the second diagram is a pullback. Then by the usual pullback-cancellation property we have that the left-hand square is a pullback as well. But since both $p$ and $\tilde{p}$ are regular epis, so is $\tilde{p}\times p$, since $\mathcal{E}$ is regular, and hence we deduce that the pullback $u_{0}$ is a regular epi.
	
	By the assumption that $\mathcal{C}$ is star-G-Mal'tsev, the reflexive graph $G$ is left star-symmetric and so by \ref{graph-relation star-symmetry} its image relation $E'$ is also left star-symmetric. Thus, there exists a $\sigma':K'_{0}\rightarrow K'_{1}$ such that $e_{1}'k_{1}'\sigma'=e_{0}'k_{0}'$ and $e'_{0}k'_{1}\sigma'=e'_{1}k'_{0}$.
	
	Finally, consider the commutative square below.
	\begin{center}
		\hfil
		\xy\xymatrixcolsep{5pc}\xymatrix@R=3em{K'_{0}\ar@{->>}[r]^{u_{0}}\ar[d]_{u_{1}\sigma'} & K_{0}\ar[d]^{\langle e_{0}k_{0},e_{1}k_{0}\rangle}\ar@{-->}[dl]^{\sigma} \\
			K_{1}\ar@{>->}[r]_(0.5){\langle e_{1}k_{1},e_{0}k_{1}\rangle} & X\times X}\endxy
		\hfil 
	\end{center}
	Since $u_{0}$ is a regular epi and $\langle e_{1}k_{1},e_{0}k_{1}\rangle$ is monic (being the star of the relation $E^{\circ}$), we get the indicated factorization $\sigma:K_{0}\rightarrow K_{1}$, which shows that $E$ is left star-symmetric. Thus, by \ref{2-star-permutability characterizations} it follows that $\mathcal{E}$ is 2-star-permutable.
\end{proof}
\vspace{3mm}

The above result yields a characterization of when the regular and exact completion (in the sense of \cite{CV}) of a weakly lex multi-pointed category are 2-star-permutable.

\begin{corollary}
	Let $(\mathcal{C},\mathcal{N})$ be a weakly lex multi-pointed category with weak kernels. Then $(\mathcal{C}_{reg}, \mathcal{N}^{\mathcal{C}_{reg}})$ is 2-star-permutable if and only if $(\mathcal{C}, \mathcal{N})$ is star-G-Mal'tsev.
\end{corollary}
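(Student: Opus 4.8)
The plan is to obtain this statement as a direct corollary of Theorem \ref{main result}, applied to the regular category $\mathcal{E}:=\mathcal{C}_{reg}$ equipped with the ideal $\mathcal{N}^{\mathcal{C}_{reg}}$ obtained from $\mathcal{N}$ by the construction described just before Lemma \ref{Ideals and proj covers}. So the real content is purely a matter of checking that the hypotheses of that theorem are met in the present situation and then translating its conclusion back.

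First I would recall, as noted in Section 2.2 following \cite{CV}, that $\mathcal{C}_{reg}$ is a regular category and that $\mathcal{C}$ appears inside it as a projective cover. Next, since $(\mathcal{C},\mathcal{N})$ has weak $\mathcal{N}$-kernels by hypothesis, Lemma \ref{Ideals and proj covers}(2) guarantees that $\mathcal{C}_{reg}$ has $\mathcal{N}^{\mathcal{C}_{reg}}$-kernels, so that $(\mathcal{C}_{reg},\mathcal{N}^{\mathcal{C}_{reg}})$ is indeed a regular multi-pointed category \emph{with kernels}, as required to even speak of its 2-star-permutability. Finally, Lemma \ref{Ideals and proj covers}(4) says that regular epimorphisms in $\mathcal{C}_{reg}$ are $\mathcal{N}^{\mathcal{C}_{reg}}$-saturating, which is precisely the additional hypothesis imposed in Theorem \ref{main result}.

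With all hypotheses verified, Theorem \ref{main result} applied to the projective cover $\mathcal{C}$ of $(\mathcal{C}_{reg},\mathcal{N}^{\mathcal{C}_{reg}})$ tells us that $(\mathcal{C}_{reg},\mathcal{N}^{\mathcal{C}_{reg}})$ is 2-star-permutable if and only if $\bigl(\mathcal{C},(\mathcal{N}^{\mathcal{C}_{reg}})_{\mathcal{C}}\bigr)$ is star-G-Mal'tsev. By Lemma \ref{Ideals and proj covers}(3) we have $(\mathcal{N}^{\mathcal{C}_{reg}})_{\mathcal{C}}=\mathcal{N}$, so the right-hand condition is exactly the assertion that $(\mathcal{C},\mathcal{N})$ is star-G-Mal'tsev, which finishes the proof. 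The identical argument works with $\mathcal{C}_{ex}$ in place of $\mathcal{C}_{reg}$, since $\mathcal{C}_{ex}$ is also regular and $\mathcal{C}$ is a projective cover of it, yielding the corresponding characterization for the exact completion announced in the paragraph preceding the statement.

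I do not anticipate any genuine obstacle: the whole argument is bookkeeping with the correspondence between ideals in $\mathcal{C}$ and in its regular (or exact) completion, all of which is already packaged in Lemma \ref{Ideals and proj covers}. The only point that deserves a moment's attention is ensuring that the ambient category really possesses the $\mathcal{N}^{\mathcal{C}_{reg}}$-kernels needed to apply Theorem \ref{main result}, and this is exactly where the hypothesis that $(\mathcal{C},\mathcal{N})$ has weak kernels enters, through part (2) of that lemma.
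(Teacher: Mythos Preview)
Your proposal is correct and follows essentially the same approach as the paper: both verify the hypotheses of Theorem~\ref{main result} using parts (2), (3), and (4) of Lemma~\ref{Ideals and proj covers} and then apply that theorem directly. Your write-up is simply a more detailed unpacking of the same bookkeeping, and your closing remark about $\mathcal{C}_{ex}$ is exactly how the paper handles the companion corollary as well.
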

\begin{proof}
	$\mathcal{C}$ appears as a projective cover inside $\mathcal{C}_{reg}$. Then \ref{main result} indeed applies to give the result because by \ref{Ideals and proj covers} we know that $\mathcal{C}_{reg}$ has $\mathcal{N}^{\mathcal{C}_{reg}}$-kernels, regular epimorphisms in $\mathcal{C}_{reg}$ are $\mathcal{N}^{\mathcal{C}_{reg}}$-saturating and $(\mathcal{N}^{\mathcal{C}_{reg}})_{\mathcal{C}}=\mathcal{N}$.
\end{proof}

In the exact same way we get the corresponding result about the exact completion $\mathcal{C}_{ex}$.

\begin{corollary}
	Let $(\mathcal{C},\mathcal{N})$ be a weakly lex multi-pointed category with weak kernels. Then $(\mathcal{C}_{ex}, \mathcal{N}^{\mathcal{C}_{ex}})$ is 2-star-permutable if and only if $(\mathcal{C}, \mathcal{N})$ is star-G-Mal'tsev.
\end{corollary}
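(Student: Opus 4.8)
The plan is to reproduce the argument for the preceding corollary essentially verbatim, with $\mathcal{C}_{reg}$ replaced by $\mathcal{C}_{ex}$, since the only inputs that argument uses are that $\mathcal{C}$ sits as a projective cover inside the relevant completion and that the hypotheses of Theorem \ref{main result} are satisfied. First I would recall, as noted in the Preliminaries, that every weakly lex category $\mathcal{C}$ appears as a projective cover inside its exact completion $\mathcal{C}_{ex}$ in the sense of \cite{CV}, and that $\mathcal{C}_{ex}$ is in particular exact, hence a regular category, so that the setting of Theorem \ref{main result} is available with $\mathcal{E}=\mathcal{C}_{ex}$.

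Next I would verify the three conditions needed to invoke Theorem \ref{main result} for the ideal $\mathcal{N}^{\mathcal{C}_{ex}}$ on $\mathcal{C}_{ex}$. All three are supplied by Lemma \ref{Ideals and proj covers}: part (2) gives that, since $\mathcal{C}$ has weak $\mathcal{N}$-kernels, the regular category $\mathcal{C}_{ex}$ has $\mathcal{N}^{\mathcal{C}_{ex}}$-kernels; part (4) gives that regular epimorphisms in $\mathcal{C}_{ex}$ are $\mathcal{N}^{\mathcal{C}_{ex}}$-saturating; and part (3) gives that the restriction $(\mathcal{N}^{\mathcal{C}_{ex}})_{\mathcal{C}}$ coincides with the original ideal $\mathcal{N}$.

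With these in hand, Theorem \ref{main result} applied to the projective cover $\mathcal{C}$ of $(\mathcal{C}_{ex},\mathcal{N}^{\mathcal{C}_{ex}})$ yields that $(\mathcal{C}_{ex},\mathcal{N}^{\mathcal{C}_{ex}})$ is 2-star-permutable if and only if $(\mathcal{C},(\mathcal{N}^{\mathcal{C}_{ex}})_{\mathcal{C}})=(\mathcal{C},\mathcal{N})$ is star-G-Mal'tsev, which is precisely the assertion of the corollary.

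I expect no genuine obstacle here: the statement is a direct transcription of the $\mathcal{C}_{reg}$ corollary, and the only point requiring care is that the relevant properties of the transferred ideal $\mathcal{N}^{\mathcal{C}_{ex}}$ on $\mathcal{C}_{ex}$ — existence of $\mathcal{N}^{\mathcal{C}_{ex}}$-kernels, saturation of regular epimorphisms, and the identity $(\mathcal{N}^{\mathcal{C}_{ex}})_{\mathcal{C}}=\mathcal{N}$ — hold. These are exactly the content of Lemma \ref{Ideals and proj covers}, which is stated for an arbitrary regular category equipped with a projective cover and therefore applies to $\mathcal{C}_{ex}$ just as it did to $\mathcal{C}_{reg}$.
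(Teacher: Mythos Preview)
Your proposal is correct and matches the paper's approach exactly: the paper simply states that the result is obtained ``in the exact same way'' as the $\mathcal{C}_{reg}$ corollary, and your argument spells out precisely that transcription, invoking Lemma~\ref{Ideals and proj covers} to verify the hypotheses of Theorem~\ref{main result} for $\mathcal{E}=\mathcal{C}_{ex}$.
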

\vspace{5mm}

\begin{remark}
	We should comment here on how \ref{main result} extends the characterizations of projective covers for regular Mal'tsev categories, due to Rosicky-Vitale\cite{RV}, and for regular subtractive categories, due to Gran-Rodelo\cite{Jonnson-Tarski}.
	
	For the Mal'tsev case, it is manifestly clear that we obtain exactly the same characterization as in \cite{RV}, i.e. our star-G-Mal'tsev categories are exactly the G-Mal'tsev ones introduced therein. Note that in that paper G-Mal'tsev is initially defined by requiring that every reflexive graph be both symmetric and transitive, but this is equivalent to just requiring symmetry and that is in fact implicitly proved in \cite{RV}.
	
	In the pointed context, it is not immediate from the definitions that our star-G-Mal'tsev, which we should probably call \emph{0-G-Mal'tsev} in this case, yields the \emph{w-subtractive} categories of Gran-Rodelo\cite{Jonnson-Tarski}. Of course, since both characterize projective covers of the same class of regular categories, they turn out to be equivalent, since any weakly lex category can always be considered a projective cover of its regular completion. On the other hand, a direct proof of the equivalence of the two notions is also not too hard to construct.
\end{remark}
\vspace{3mm}

We now recall a particular class of multi-pointed categories, the so-called \emph{proto-pointed context} introduced in \cite{Good Ideals}. This refers to a regular category in which every object has a smallest subobject and where a morphism $f:X\rightarrow Y$ is a null morphism precisely when it factors through the smallest subobject of its codomain $Y$. In the case of a variety $\mathbb{V}$ of universal algebras these morphisms are exactly those whose image is the subalgebra $E$ generated by the constants. This latter situation has been called the \emph{algebraic proto-pointed context} in \cite{Diamonds}. In this context the $\mathcal{N}$-kernel of a morphism $f:X\rightarrow Y$ is given by the subalgebra of elements $x\in X$ that are mapped by $f$ to an element in $E$. Hence, the star of a relation $R\rightarrowtail X\times X$ consists of all pairs $(e,x)\in R$ such that $e\in E$ and the relation is star-symmetric precisely if, for every $e\in E$ and $x\in X$, we have $(e,x)\in R\iff(x,e)\in R$. Observe also that, as long as the set of constants is nonempty, all morphisms will be saturating, since the condition for a homomorphism here to be saturating says precisely that it is surjective on constants. In fact, it is also not hard to see that regular epimorphisms are saturating in any proto-pointed context, not just the varietal one.

Now suppose we are in an algebraic proto-pointed context and that the set of constants of the variety is nonempty. It was proved in \cite{Diamonds} that in this case 2-star-permutability is equivalent to a priori more general properties such as \emph{3-star-permutability} and the \emph{symmetric saturation property}, but also to the syntactic condition defining \emph{E-subtractive varieties} in the sense of \cite{U}. We would like to conclude this note by showing that the equivalence with the latter notion can also directly be obtained from our results.

\begin{corollary}
	Let $\mathbb{V}$ be a variety of universal algebras with set of constants $E\neq\emptyset$. Then $\mathbb{V}$ is 2-star-permutable if and only if the following syntactic condition holds: 
	
	For every $e\in E$ there exists a binary term $s_{e}(x,y)$ such that $s_{e}(x,x)=e$ and $s_{e}(x,e)=x$.
\end{corollary}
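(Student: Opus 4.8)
The plan is to realize the variety $\mathbb{V}$ as a projective cover and invoke \ref{main result}, then unwind what star-G-Mal'tsev means in concrete varietal terms. Recall that $\mathbb{V}$ itself, being (regular and exact) with enough projectives, is its own projective cover; more to the point, the full subcategory of free algebras $\mathcal{C}$ is a projective cover of $\mathbb{V}$, every free algebra is regular projective, and every algebra is a regular-epimorphic image of a free one. Since the set of constants $E$ is nonempty, all morphisms of $\mathbb{V}$ are saturating (as noted in the paragraph preceding the statement), so the hypotheses of \ref{main result} are met for the ideal $\mathcal{N}$ of null morphisms of the algebraic proto-pointed context. Hence $\mathbb{V}$ is 2-star-permutable if and only if $(\mathcal{C}, \mathcal{N}_{\mathcal{C}})$ is star-G-Mal'tsev, i.e. every reflexive graph of free algebras is left star-symmetric.

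Next I would translate ``every reflexive graph in $\mathcal{C}$ is left star-symmetric'' into the syntactic condition. The universal example of a reflexive graph on a free algebra is the one built on two generators: take $G = F(x, y)$ the free algebra on two generators, $X = F(x)$ the free algebra on one generator, $g_0 \colon G \to X$ the map $x \mapsto x, y \mapsto x$, and $g_1 \colon G \to X$ the map $x \mapsto x, y \mapsto y$ (say; one must pick the right pair so that $g_0, g_1$ are split by the diagonal $\delta \colon X \to G$ sending $x \mapsto x$, which works since $g_i \delta = 1_X$). Because $\mathcal{C}$ consists of free algebras and every reflexive graph receives a regular epi from such a ``generic'' one via a free presentation, left star-symmetry of this single generic reflexive graph is equivalent to left star-symmetry of all reflexive graphs in $\mathcal{C}$ — this is the standard ``it suffices to check on free algebras / on the generic instance'' reduction. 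Now a weak $\mathcal{N}$-kernel of $g_0$ is computed by taking a free cover of the honest $\mathcal{N}$-kernel in $\mathbb{V}$; the $\mathcal{N}$-kernel $K_0$ of $g_0$ consists of those elements of $F(x,y)$ sent by $g_0$ into the subalgebra generated by constants, i.e. of terms $t(x,y)$ with $t(x,x) \in E$. For the left star-symmetry square to commute we need, for each generator-level witness, a term $\sigma$ landing in $K_1$ (terms $s(x,y)$ with $s(x,x) = $ ... wait, rather $s$ with $g_1 s \in \mathcal{N}$, meaning $s(x,y)$ with the second variable sent to a constant) such that $g_1 k_1 \sigma = g_0 k_0$ and $g_0 k_1 \sigma = g_1 k_0$. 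Specializing $k_0$ to a well-chosen generating element of $K_0$ — the element $t$ with $t(x,x) = e$ — forces the existence of a binary term $s_e(x,y)$ with $s_e(x,x) = e$ and $s_e(x,e) = x$; conversely, given such $s_e$ for every $e \in E$, one assembles the required $\sigma$ on any reflexive graph componentwise.

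So the key steps, in order, are: (1) identify $\mathcal{C} = $ free algebras as a projective cover of $\mathbb{V}$ with $\mathcal{N}$ the proto-pointed null ideal, and check that $\mathcal{N}_{\mathcal{C}}$ and the saturating hypothesis are as claimed (invoking that regular epis are saturating in any proto-pointed context with $E \neq \emptyset$); (2) apply \ref{main result} to get the equivalence ``$\mathbb{V}$ 2-star-permutable $\iff$ $\mathcal{C}$ star-G-Mal'tsev''; (3) reduce star-G-Mal'tsev to left star-symmetry of the generic reflexive graph $F(x,y) \rightrightarrows F(x)$ by a free-presentation argument; (4) compute weak $\mathcal{N}$-kernels of the two legs in terms of binary terms, read off the commutativity conditions of the star-symmetry square, and extract exactly ``$\exists s_e$ with $s_e(x,x) = e$, $s_e(x,e) = x$''; (5) verify the converse by using the $s_e$'s to build $\sigma$ for an arbitrary reflexive graph. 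I expect the main obstacle to be step (4): bookkeeping the weak $\mathcal{N}$-kernels correctly (they are free covers of the true kernels, not the true kernels themselves, so one works with a presentation) and making sure that the two serially-commuting equations $g_1 k_1 \sigma = g_0 k_0$ and $g_0 k_1 \sigma = g_1 k_0$ specialize to precisely $s_e(x,x) = e$ and $s_e(x,e) = x$ and nothing weaker or stronger — in particular being careful that it is enough to test on the single element of $K_0$ corresponding to each constant $e$, which is where the generators-suffice reduction of step (3) pulls its weight.
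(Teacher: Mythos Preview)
Your overall strategy---take $\mathcal{C}$ to be the free algebras, apply \ref{main result}, and then read off the syntactic condition from a well-chosen reflexive graph---is exactly the paper's. The problem is your choice of graph. As written, $g_1\colon y\mapsto y$ is not even a map into $F(x)$, and any repair that keeps the graph independent of $e$ will fail in step~(4). With $g_0\colon x\mapsto x,\ y\mapsto x$, the $\mathcal{N}$-kernel $K_0$ consists of terms $t(x,y)$ with $t(x,x)\in E$; the only canonical element of $K_0$ hitting a given $e$ is the constant term $e$ itself, and feeding that into the star-symmetry equations yields only the triviality $\sigma(e)=e$. No subtraction term can be squeezed out, because neither leg of your graph ever sends a \emph{generator} to $e$. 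The paper's move is to let the graph depend on $e$: take $g_0(x)=x$, $g_0(y)=e$ and $g_1(x)=g_1(y)=x$. Then $y\in K_0$ with $(g_0(y),g_1(y))=(e,x)$, and left star-symmetry applied to this single element forces an $s_e\in K_1$ with $g_1(s_e)=e$ and $g_0(s_e)=x$, i.e.\ $s_e(x,x)=e$ and $s_e(x,e)=x$. Your step~(3) reduction to a ``single generic graph'' is therefore a red herring; one graph per constant is both sufficient and necessary for the argument.

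For the converse you propose to manufacture $\sigma$ for an arbitrary reflexive graph of free algebras using the $s_e$'s. This can be made to work, but it is heavier than needed: the paper bypasses \ref{main result} entirely here and simply checks condition~(3) of \ref{2-star-permutability characterizations} directly in $\mathbb{V}$. Given $(e,x)\in R$ with $R$ reflexive, one also has $(x,x)\in R$, and then $(s_e(x,e),s_e(x,x))=(x,e)\in R$ by compatibility with operations. That is a two-line argument; building $\sigma$ on weak kernels of free covers is not.
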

\begin{proof}
	Suppose 2-star-permutability holds and fix any $e\in E$. We then consider a graph $\xy\xymatrix{F(x,y)\ar@<1ex>[r]^{g_{0}}\ar@<-1ex>[r]_{g_{1}} & F(x)}\endxy$ between free algebras on 2 and 1 generator respectively, where $g_{0},g_{1}$ are defined by setting $g_{0}(x)=x$, $g_{0}(y)=e$ and $g_{1}(x)=g_{1}(y)=x$. This graph is reflexive, since it is clearly split by the map $\delta:F(x)\rightarrow F(x,y)$ defined by $\delta(x)=x$. Since free algebras are projective, we can apply \ref{main result} (i.e. $\mathcal{C}$ here is the full subcategory of free algebras) to deduce that this graph must be star-symmetric.
	
	Now we have $(e,x)=(g_{0}(y),g_{1}(y))$, so by the star-symmetry we must also have $(x,e)=(g_{0}(s_{e}),g_{1}(s_{e}))$ for some $s_{e}(x,y)\in F(x,y)$. Thus, $x=g_{0}(s_{e}(x,y))=s_{e}(x,e)$ and $e=g_{1}(s_{e}(x,y))=s_{e}(x,x)$.
	\vspace{2mm}
	
	Conversely, suppose we have binary terms $s_{e}(x,y)$ for all $e\in E$ with the indicated properties. We will show that any reflexive relation $R\rightarrowtail X\times X$ in the variety $\mathbb{V}$ is left star-symmetric.
	
	Indeed, assume that $(e,x)\in R$ for some $e\in E$ and $x\in X$. Since $R$ is reflexive, we also have $(x,x)\in R$. By compatibility with the operations we then must have $(s_{e}(x,e),s_{e}(x,x))\in R$, i.e. that $(x,e)\in R$. This concludes the proof.
\end{proof}	
\vspace{2mm}

As particular examples of $E$-subtractive varieties one has the categories \textbf{Ring} of unitary rings, as well as the categories \textbf{Heyt}, \textbf{Bool} of Heyting and Boolean algebras respectively.

\begin{remark}
	Since our main result is stated for any regular category, without requiring exactness, it can equally well be applied to \emph{quasi-varieties} of universal algebras, since these are still regular categories. This encompasses further interesting examples, such as the category \textbf{RedRng} of \emph{reduced rings}, i.e. unitary rings $R$ satisfying $(\forall x\in R)(\forall n\geq 1) (x^{n}=0\implies x=0)$.
\end{remark}

\vspace{2cm}

\end{document}